\theoremstyle{plain}
\newtheorem{thm}{Theorem}[section]
\newtheorem{cor}[thm]{Corollary}
\newtheorem{lem}[thm]{Lemma}
\newtheorem{conj}[thm]{Conjecture}
\newtheorem{prop}[thm]{Proposition}
\newtheorem{defn}[thm]{Definition}
\newtheorem{rem}[thm]{Remark}
\newtheorem{exmp}[thm]{Example}
\numberwithin{equation}{section}
\def\leq{\leqslant}
\def\geq{\geqslant}
\newcommand\C{\mathbb C}
\newcommand\PP{\mathbb P}
\newcommand\MM{\mathbb M}
\newcommand\Cc{\mathcal C}
\newcommand\Dc{\mathcal D}
\newcommand\Oc{\mathcal O}
\def\AA{\mathbb{A}}
\def\d{\mathrm{d}}
\DeclareMathOperator\Proj{Proj}
\DeclareMathOperator\im{im}
\DeclareMathOperator\Syz{Syz}
 \DeclareMathOperator\cok{coker}
\def\CC{{\mathbb{C}}}
\def\Disc{{\mathrm{Disc}}}
\def\Res{{\mathrm{Res}}}
\title{Freeness and invariants of rational plane curves}
\author{Laurent Bus\'e}
\address{Universit\'e C\^ote d'Azur, Inria, France.}
\email{laurent.buse@inria.fr}
\author{Alexandru Dimca$^{1}$}
\address{Universit\'e C\^ote d'Azur, Laboratoire Jean-Alexandre Dieudonn\'e and Inria, France.}
\email{alexandru.dimca@unice.fr}
\author{Gabriel Sticlaru}
\address{Faculty of Mathematics and Informatics, Ovidius University, Bd.~Mamaia 124, 900527 Constanta, Romania.}
\email{gabrielsticlaru@yahoo.com}
\date{\today}
\thanks{$^1$ This work has been partially supported by the French government, through the $\rm UCA^{\rm JEDI}$ Investments in the Future project managed by the National Research Agency (ANR) with the reference number ANR-15-IDEX-01.}
\begin{document}

\begin{abstract} Given a parameterization $\phi$ of a rational plane curve $\Cc$, we study some invariants of $\Cc$ via $\phi$. We first focus on the characterization of rational cuspidal curves, in particular we establish a relation between the discriminant of the pull-back of a line via $\phi$, the dual curve of $\Cc$ and its singular points. Then, by analyzing the pull-backs of the global differential forms via $\phi$, we prove that the (nearly) freeness of a rational curve can be tested by inspecting the Hilbert function of the kernel of a canonical map. As a by product, we also show that the global Tjurina number of a rational curve can be computed directly from one of its parameterization, without relying on the computation of an equation of $\Cc$.
\end{abstract}

\maketitle

\section{Introduction}

The study of rational plane curves is a classical topic in algebraic geometry with a rich literature going back to the nineteenth century. A  rational plane curve $\Cc$ can be given either via a parametrization $\phi : \PP^1 \rightarrow \PP^2$ or by an implicit equation $F=0$, but classically most of its numerical invariants are defined, and can be tested, using the polynomial $F$. 
Given the parametrization $\phi : \PP^1 \rightarrow \PP^2$, there is a simple procedure to recover the equation $F$, which is recalled in Proposition  \ref{prop:implicit}. However, it seems interesting to us to develop tools allowing one to compute  invariants of the curve $\Cc$, and to test various of its properties, directly from the parametrization $\phi$, without computing as an intermediary step the polynomial $F$. This objective is also motivated by applications in the field of geometric modeling. Indeed, the class of rational curves is an active topic of research in computer aided geometric design where parameterized curves are the elementary components for representing plane geometric objects. Thus, there is a growing interest in computational methods for extracting information of a rational curve from its parameterization, for instance information on the structure of its singular locus (see for instance \cite{CKPU13,BD14,BGI} and the references therein). In this applied context, the computation of an implicit equation is very often useless because only pieces of parameterized curves are of interest.

In this paper, we study some properties of rational curves, via their parameterizations, that are connected to the freeness of a curve. The concept of free, and nearly free, curves have been recently introduced and this theory is fastly developing because of the very rich geometry of these curves (e.g.~\cite{PW99,DP1,ST14,DS14,DStRIMS,AGL17,MV17,DS18new}). A curve $\Cc: F=0$ is said to be a free curve if the Jacobian ideal of $F$ is a saturated ideal. Although this property can be easily tested from the polynomial $F$, it seems much more delicate to test it from a parameterization of the curve, if this latter is assumed to be rational. The same point holds for instance for the global Tjurina number $\tau(\Cc)$ of the curve, which is  the degree of the Jacobian ideal of the polynomial $F$. As our main results, we will provide new methods to decide the freeness, and to compute the global Tjurina number, of a rational curve directly from a parameterization. We emphasize that free and nearly free curves are, surprisingly enough, deeply connected to rational curves, especially to rational cuspidal curves, i.e.~those rational curves whose singular points are all unibranch. Therefore, it makes sense to focus on the class of rational free and nearly free curves (see Figure \ref{setdiagram}).

\medskip

Here is the content of this paper. In Section \ref{sec:prelim}, we recall some basic properties of rational plane curves, and then of free and nearly free curves. The only new result here is Theorem \ref{thmmainA}. This homological property of rational curves was proved for the special case of rational cuspidal curves in \cite[Proposition 3.6]{DStRIMS}, using a different approach.

In Section \ref{sec:dualcurve}, we focus on characterizing cuspidal curves among the rational curves. For that purpose, we study the dual curve $\Cc^\vee$ of our rational curve $\Cc$. In particular, we explain how to get both the implicit equation $F^\vee=0$ and a parametrization $\phi^\vee$ for this dual curve from the parametrization $\phi$; see Theorem \ref{thmrdc} and Theorem \ref{thm52}. We also provide a new interpretation of a characterization of cuspidal curves as a degeneracy locus of a certain matrix, following \cite{BGI}. 

The global Tjurina number $\tau(\Cc)$ can be defined as the sum of the Tjurina numbers of all the singularities of $\Cc$. This number plays a key role in deciding whether a given curve is free or nearly free, see Theorem \ref{thm4}.
In Section \ref{sec:globalTjurina}, we show that this important invariant can be recovered from the dimension of the kernel or cokernel of some homogeneous components of mappings induced by $\phi$ by taking the pull-back of differential forms with polynomial coefficients on $\C^3$. This is done in  Theorem
\ref{thmforms1} and  in Theorem \ref{thm2forms}. One can regard these results as a global analog of the similar flavor result for curve singularities in \cite[Theorem (1.1), claim (2)]{DGr}.
We also give a criterion for establishing the freeness properties of the rational curve $\Cc$
in terms of dimensions of the kernel of the homogeneous components of the pull-back morphism induced by $\phi$ at the level of 2-forms in Theorem \ref{thmmdr}.

\medskip

Many  examples for this work were computed using the computer algebra system {\tt Singular}, available at \url{https://www.singular.uni-kl.de/}. Scripts for computing some invariants and properties given in this paper are available at 
\url{http://math.unice.fr/~dimca/singular.html}.

\medskip

We would like to thank the anonymous referee for the very careful reading of our manuscript and for his very useful remarks.

\section{Preliminaries on rational plane curves and freeness properties}\label{sec:prelim}

In this section we first recall some basic facts on complex rational plane curves. We begin with the description of an implicitization method, that is a method to compute an implicit equation of a rational plane curve from one of its parameterization. Then, we will recall the notion of free and nearly free plane curves that have been recently introduced and have attracted a lot of interest, as already mentioned in the introduction. The only new result in this section is Theorem \ref{thmmainA}; it provides a homological property of rational curves that was only proved so far for rational cuspidal by means of an other approach \cite[Proposition 3.6]{DStRIMS}. We end this section with some comments on the relations between rational curves, rational cuspidal curves, free and nearly free curves. 

\medskip

Consider the regular map
$$\begin{array}{rlc}
	\phi : \PP^1 & \rightarrow & \PP^2 \\
	(s:t) & \mapsto & (f_0:f_1:f_2)
\end{array}
$$
where $f_0,f_1,f_2$ are three homogeneous polynomials in the polynomial ring $R:=\C[s,t]$, of the same degree $d\geq 1$. The homogeneous coordinates of $\PP^2$ are denoted by $(x:y:z)$ and its homogeneous coordinate ring by $S:=\C[x,y,z]$. Observe that we necessarily have that $\gcd (f_0,f_1,f_2)=1$  since $\phi$ is a regular mapping defined on $\PP^1$, whose image  is a plane curve that we denote by $\Cc$. 
 The following result is well known, but we include a proof for the reader's convenience.

\begin{lem}\label{proplem1}
Assume that $\gcd (f_0,f_1,f_2)=1$ and $\phi$ is generically $e$-to-$1$, that is the fiber $\phi^{-1}(P)$ contains $e$ points for $P \in \Cc$ a generic point. Let $n: \tilde \Cc=\PP^1 \to \Cc$ be the normalization morphism. Then there is a unique mapping $u=(u_0,u_1): \PP^1 \to \PP^1$, where $u_0,u_1$ are homogeneous polynomials in $R$ of degree $e$ such that 
$\gcd (u_0,u_1)=1$ and $\phi=n \circ u$. Moreover, we have that $d=e \cdot \deg \Cc$, where $ \deg \Cc$ denotes the degree of the curve $\Cc$. 
\end{lem}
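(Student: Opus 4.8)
The plan is to use the universal property of the normalization. Since $\Cc$ is a rational plane curve, its normalization $\tilde\Cc$ is isomorphic to $\PP^1$, and the morphism $\phi : \PP^1 \to \Cc \subset \PP^2$ factors uniquely through the normalization $n : \tilde\Cc = \PP^1 \to \Cc$ precisely because $\PP^1$ is smooth (hence normal) and $\phi$ is dominant onto $\Cc$. This gives a morphism $u : \PP^1 \to \PP^1$ with $\phi = n \circ u$, and $u$ is unique because $n$ is birational (so a regular map to $\Cc$ has at most one lift through $n$ over the dense open set where $n$ is an isomorphism, and by separatedness this extends). The uniqueness of $u$ forces $u$ to be generically $e$-to-$1$ onto $\tilde\Cc$, since $\phi$ is generically $e$-to-$1$ onto $\Cc$ and $n$ is birational.

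Next I would translate this into coordinates. Any morphism $u : \PP^1 \to \PP^1$ is given by a pair $(u_0 : u_1)$ of homogeneous polynomials of the same degree $m$ in $R = \C[s,t]$ with $\gcd(u_0,u_1)=1$, and such a morphism has topological degree $m$ (the number of preimages of a generic point). Hence $m = e$, which identifies the degree of the $u_i$. Then $\phi = n \circ u$ at the level of polynomial maps says that $(f_0 : f_1 : f_2) = (g_0(u_0,u_1) : g_1(u_0,u_1) : g_2(u_0,u_1))$ where $n = (g_0 : g_1 : g_2)$ with $g_j$ homogeneous of some degree $k$ and $\gcd(g_0,g_1,g_2)=1$; here $k = \deg \Cc$ since $n$ is the normalization of a degree-$\deg\Cc$ curve parametrized birationally. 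Comparing degrees in $R$ gives $d = e\cdot k = e \cdot \deg\Cc$. The condition $\gcd(f_0,f_1,f_2)=1$ is automatic and consistent: a common factor of the $g_j(u_0,u_1)$ would, using $\gcd(u_0,u_1)=1$, force a common factor of the $g_j$, contradicting $\gcd(g_0,g_1,g_2)=1$.

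For the uniqueness claim at the level of the $u_i$ themselves, I would note that two such representations $(u_0:u_1)$ and $(u_0':u_1')$ defining the same morphism $u$ differ by a common nonzero scalar once we impose $\gcd=1$ and equal degrees, so $u$ as a morphism determines the pair up to the (harmless) scalar; since the factorization $\phi = n\circ u$ determines $u$ as a morphism uniquely, we are done.

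The main obstacle, or at least the point requiring the most care, is the coordinate-level bookkeeping: making precise that composing the normalization map $n$ (given by forms $g_j$ of degree $\deg\Cc$) with $u$ (given by forms of degree $e$) yields forms of degree $e\cdot\deg\Cc$ with no spurious common factor, and reconciling this with the given $\gcd(f_0,f_1,f_2)=1$ — in other words, checking that the naive composite $g_j(u_0,u_1)$ is already the primitive representative and no cancellation occurs. Everything else is a direct appeal to the universal property of normalization and the standard fact that the degree of a self-map of $\PP^1$ equals the common degree of its defining coprime forms.
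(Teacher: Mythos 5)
Your proof is correct, and its first half --- the universal property of normalization, the identification $\tilde \Cc \cong \PP^1$, and the description of $u$ by a coprime pair of degree-$e$ forms whose common degree is the degree of the map --- is exactly the paper's argument. Where you diverge is in the proof of $d = e\cdot \deg\Cc$. The paper proves this directly and uniformly in $e$: a generic line meets $\Cc$ in $\deg\Cc$ points, each with $e$ preimages under $\phi$, and since the linear system spanned by $f_0,f_1,f_2$ is base-point-free (this is where $\gcd(f_0,f_1,f_2)=1$ enters), a generic member $af_0+bf_1+cf_2$ has only simple roots, so the root count is exactly $d$ and $ed'=d$. You instead write $n=(g_0:g_1:g_2)$ with coprime forms of degree $k$, verify (correctly, via the gcd bookkeeping) that $g_j(u_0,u_1)$ is already a primitive representative so that $f_j$ agrees with it up to a common scalar, and compare degrees to get $d=ek$. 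This is a clean reduction, but it concentrates the entire content of the degree formula into the assertion $k=\deg\Cc$, which is precisely the $e=1$ case of the lemma being proven and which you quote as a standard fact. That fact is indeed standard (it amounts to $\deg n^*\Oc_{\PP^2}(1)=\deg\Cc$ for the birational map $n$), but a fully self-contained proof would still need the generic-line/simple-roots argument at that point --- which is exactly what the paper supplies, and for all $e$ at once. In short: same skeleton, but the paper's degree count avoids invoking the birational case as a black box, while your route makes explicit the coordinate-level composition of parametrizations, which the paper never needs.
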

 
 In particular, if $e=1$, then $u$ is an automorphism of $\PP^1$, and hence $\phi$ can be regarded as being the normalization morphism $n: \tilde \Cc=\PP^1 \to \Cc$, with $d=\deg \Cc$.

\begin{proof}
Using the universal property of the normalization of a variety, we get a morphism $u: \PP^1 \to \tilde \Cc$. Note that one has $ \tilde \Cc=\PP^1$, since any curve dominated by $\PP^1$ is a rational curve. It follows that $u: \PP^1 \to \PP^1$, and any such morphism has the form
$u=(u_0,u_1)$,  where $u_0,u_1$ are homogeneous polynomials in $R$ of some degree $e$ and such that 
$\gcd (u_0,u_1)=1$. The integer $e$ is just the degree of the map $u$, that is the number of points in $u^{-1}(Q)$, for $Q\in \PP^1$ generic.

Note that a generic line $L:ax+by+cz=0$ intersects $\Cc$ in exactly $d'=\deg \Cc$ points. These points can be chosen such that they have a unique pre-image under $n$,
and hence $ed'$ coincides with the number of solutions of the equation $af_0+bf_1+cf_2=0$.
But a generic member of a linear system is smooth outside the base locus. Here the base locus is empty due to the condition $\gcd (f_0,f_1,f_2)=1$, hence all the roots of $af_0+bf_1+cf_2=0$ are simple for generic $a,b,c$.
Hence $ed'=d$ as claimed.
\end{proof}

\noindent {\bf Convention:} Unless stated otherwise, we assume in the sequel that the parametrization $\phi$ is generically $1$-to-$1$ on $\Cc$, i.e. that $\phi$ is essentially the normalization of the curve $\Cc$. In view of Lemma  \autoref{proplem1}, we loose no information in this way.

\subsection{Implicitization of a curve parameterization}
We denote by $I$ the homogeneous ideal of $R$ generated by the polynomials $f_0,f_1,f_2$.
By Hilbert-Burch Theorem \cite[Theorem 20.15]{Eis95}, there exist two non-negative integers $\mu_1$ and $\mu_2$ such that the complex of graded $R$-modules 
\begin{equation}\label{eq:mubasis}
	0\rightarrow \oplus_{i=1}^2 R(-d-\mu_i) \xrightarrow{\psi} R^3(-d) \xrightarrow{(f_0 \ f_1 \ f_2)} R \rightarrow R/I \rightarrow 0
\end{equation}
is exact. Without loss of generality, one can assume that $\mu_1\leq \mu_2$. Moreover, we have that $\mu_1+\mu_2=d$ and $\mu_1 \geq 1$ as soon as $d \geq 2$.   

Consider the graded $R$-module of syzygies of $I$, namely
$$\Syz(I)=\{(g_0,g_1,g_2)\in R^3 : g_0f_0+g_1f_1+g_2f_2=0\},$$
and let $p=(p_0,p_1,p_2)$, $q=(q_0,q_1,q_2)$ be a basis of this free $R$-module, with $\deg p_i=\mu_1$ and $\deg q_i=\mu_2$ for all $i=0,1,2$.
The two columns of a matrix of $\psi$ can be chosen as the two syzygies $p$ and $q$ and in addition, the 2-minors of this matrix give back the polynomials $f_0,f_1,f_2$, up to multiplication by a nonzero constant. 
It is well-known that the syzygy module of $I$ yields the equations of the symmetric algebra of $I$, and hence it defines the graph of the regular mapping $\phi$, namely 
$$\Gamma=\{(s:t)\times(x:y:z) \in \PP^1 \times \PP^2 : \phi(s:t)=(x:y:z)\}$$
(see for instance \cite[Proposition 1.1]{B09}). More concretely, from the two syzygies $p$ and $q$ we define the two polynomials
$$L_1(s,t;x,y,z)=p_0(s,t)x+p_1(s,t)y+p_2(s,t)z$$
and
$$L_2(s,t;x,y,z)=q_0(s,t)x+q_1(s,t)y+q_2(s,t)z.$$
Their intersection in $\PP^1\times \PP^2$ is precisely equal to $\Gamma$. It follows that the canonical projection of $\Gamma$ on $\PP^2$ is the curve $\Cc$. In terms of equations, we get the following well known property (see for instance \cite[Theorem 1]{CSC98} or \cite[\S 5.1.1]{BJ03}).

\begin{prop}\label{prop:implicit} 
	With the above notation, the Sylvester resultant $\Res(L_1,L_2)$ with respect to the homogeneous variables $(s,t)$ is an implicit equation of the curve $\Cc$.
\end{prop}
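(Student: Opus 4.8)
The plan is to use the fact, recalled above, that the ideal $I=(f_0,f_1,f_2)$ and its two minimal syzygies $p,q$ of degrees $\mu_1,\mu_2$ encode the symmetric algebra $\Sym_R(I)$, and that the bihomogeneous variety $\Biproj$ of this symmetric algebra is precisely the graph $\Gamma\subset\PP^1\times\PP^2$ when $\phi$ is birational onto $\Cc$. First I would observe that $\Gamma = V(L_1)\cap V(L_2)$ inside $\PP^1\times\PP^2$, where $L_1,L_2$ are the two bilinear forms built from $p$ and $q$; this is the content of the paragraph preceding the statement. Since $\Gamma$ maps isomorphically onto $\Cc$ under the second projection $\pi_2\colon\PP^1\times\PP^2\dashrightarrow\PP^2$ (away from the empty base locus, using $\gcd(f_0,f_1,f_2)=1$), the image $\pi_2(\Gamma)=\Cc$ set-theoretically, and with the right multiplicity because $\phi$ is generically $1$-to-$1$.

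Next I would bring in the Sylvester resultant: viewing $L_1,L_2$ as two forms in $R'=S[s,t]$ that are homogeneous of degrees $\mu_1$ and $\mu_2$ in $(s,t)$, their resultant $\Res(L_1,L_2)$ with respect to $(s,t)$ is a homogeneous polynomial in $S=\C[x,y,z]$. The classical elimination property of the resultant says that $\Res(L_1,L_2)$ vanishes at a point $(x_0:y_0:z_0)\in\PP^2$ exactly when the specialized forms $L_1(s,t;x_0,y_0,z_0)$ and $L_2(s,t;x_0,y_0,z_0)$ have a common root in $\PP^1$ — i.e.\ exactly when the fiber of $\Gamma\to\PP^2$ over that point is nonempty — provided the resultant does not vanish identically. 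Hence $V(\Res(L_1,L_2))=\pi_2(\Gamma)=\Cc$ as sets; one must also check $\Res(L_1,L_2)\not\equiv 0$, which follows because $\Gamma$ is one-dimensional and does not dominate $\PP^2$ (equivalently, the leading coefficients of $L_1,L_2$ in $t$, which are $p_0(1,0)x+\cdots$ and $q_0(1,0)x+\cdots$, do not have an identically vanishing resultant since $p,q$ are linearly independent syzygies). Therefore $\Res(L_1,L_2)$ is a power (times a constant) of the irreducible equation $F$ of $\Cc$.

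To pin down that the power is exactly one — so that $\Res(L_1,L_2)$ is literally an implicit equation of $\Cc$ and not a proper power of it — I would compare degrees. The resultant of two forms of degrees $\mu_1$ and $\mu_2$ in two variables, with coefficients linear in $(x,y,z)$, has degree $\mu_1+\mu_2=d$ in $(x:y:z)$; on the other hand $\deg\Cc=d$ by Lemma \ref{proplem1} (recall our standing convention $e=1$). Since $\deg F=\deg\Cc=d$, the two polynomials $\Res(L_1,L_2)$ and $F$ have the same degree, so $\Res(L_1,L_2)=cF$ for some $c\in\C^\ast$, and this constant is nonzero precisely because the resultant does not vanish identically. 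This degree count is the step I expect to require the most care: one has to be sure the Sylvester matrix is the right size (it is $(\mu_1+\mu_2)\times(\mu_1+\mu_2)$ with entries that are linear forms in $x,y,z$, giving total degree $\mu_1+\mu_2$) and that no extraneous factors appear, which is exactly where the hypotheses $\gcd(f_0,f_1,f_2)=1$ and generic injectivity of $\phi$ are used. Alternatively, one can cite \cite[Theorem 1]{CSC98} or \cite[\S 5.1.1]{BJ03} for this degree-and-multiplicity bookkeeping once the set-theoretic statement $V(\Res(L_1,L_2))=\Cc$ has been established.
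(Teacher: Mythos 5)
The paper does not actually prove Proposition \ref{prop:implicit}; it presents it as a known fact and cites \cite[Theorem 1]{CSC98} and \cite[\S 5.1.1]{BJ03}. Your argument is correct and is essentially the standard proof found in those references: (a) the identification $V(L_1)\cap V(L_2)=\Gamma$ in $\PP^1\times\PP^2$, which uses $\gcd(f_0,f_1,f_2)=1$ to rule out fibers that are whole lines (if $p(s_0,t_0)$ and $q(s_0,t_0)$ were proportional, the $2$-minors, i.e.\ the $f_i$, would all vanish at $(s_0,t_0)$); (b) the elimination property of the resultant of two binary forms together with properness of $\PP^1\times\PP^2\to\PP^2$, giving $V(\Res(L_1,L_2))=\pi_2(\Gamma)=\Cc$ set-theoretically; and (c) the degree count $\deg\Res(L_1,L_2)=\mu_1+\mu_2=d=\deg\Cc$ (using the standing convention $e=1$ from Lemma \ref{proplem1}) to exclude a proper power of the irreducible $F$. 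Two minor points: $\Gamma$ does not map \emph{isomorphically} onto $\Cc$ under $\pi_2$ (it is the normalization, hence only birational when $\Cc$ is singular), but you use only the set-theoretic image and the degree bookkeeping, so nothing breaks; and the cleanest justification that $\Res(L_1,L_2)\not\equiv 0$ is simply that otherwise every point of $\PP^2$ would lie in $\pi_2(\Gamma)$, contradicting $\dim\Gamma=1$ --- your parenthetical about leading coefficients is unnecessary and somewhat murky.
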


Besides the implicit equation of the curve $\Cc$, some other properties can be extracted from the parameterization of a rational plane curve, as for instance the pre-image(s) of a point. Let $P$ be a point on the curve $\Cc\subset \PP^2$ of multiplicity $m_P(\Cc)$. The \emph{pull-back polynomial} of $P$ through $\phi$ is the homogeneous polynomial in $R$ of degree $m_P(\Cc)$ whose roots are the pre-images of $P$ via $\phi$, counted with multiplicity, namely
\begin{equation}\label{eq:invpol}
H_P(s,t)=\prod_{i=1}^{r_P}(\beta_is-\alpha_it)^{m_i}
\end{equation}
where the product is taken over all distinct pairs $(\alpha_i:\beta_i)\in \PP^1$ such that $\phi(\alpha_i:\beta_i)=P$. Moreover, the integer $m_i$ is the multiplicity of the irreducible branch curve at  $\phi(\alpha_i:\beta_i)$, and hence we have that $\deg(H_P)=\sum_{i=1}^{r_P} m_i=m_P(\Cc)$.

\begin{prop}
	\label{prop0}
	Let $P$ be a point on the curve $\Cc$ and suppose given two lines $\Dc_1:a_0x+a_1y+a_2z=0$ and  $\Dc_2:b_0x+b_1y+b_2z=0$ whose intersection defines $P$. Then, the following equalities hold up to nonzero multiplicative constants:
	$$H_P(s,t)=\gcd(a_0f_0+a_1f_1+a_2f_2,b_0f_0+b_1f_1+b_2f_2) =\gcd(L_1(s,t;P),L_2(s,t;P)).$$
\end{prop}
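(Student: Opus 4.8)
The plan is to interpret both descriptions of $H_P$ geometrically via the graph $\Gamma$ and the parametrization $\phi$. First I would fix the point $P$ as the intersection $\Dc_1 \cap \Dc_2$ and consider the polynomial $g_{\Dc}(s,t) := a_0 f_0 + a_1 f_1 + a_2 f_2$ attached to a line $\Dc : a_0 x + a_1 y + a_2 z = 0$ through $P$. Its roots $(\alpha:\beta) \in \PP^1$ are precisely the parameters with $\phi(\alpha:\beta) \in \Dc$, and by the projection formula / local intersection theory the multiplicity of $(\alpha:\beta)$ as a root of $g_{\Dc}$ equals the intersection multiplicity at $\phi(\alpha:\beta)$ of the curve $\Cc$ with the line $\Dc$, pulled back along the branch corresponding to $(\alpha:\beta)$. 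When $\Dc$ is a \emph{generic} line through $P$ this intersection multiplicity at $P$ along the $i$-th branch is exactly $m_i$ (the branch multiplicity), while at points other than $P$ the line meets $\Cc$ transversally; hence for generic such $\Dc$ one has $\gcd$ considerations reducing to the part of $g_{\Dc}$ supported over $P$, which is $H_P$ up to a nonzero constant.

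The core step is then to show that taking the gcd of $g_{\Dc_1}$ and $g_{\Dc_2}$ for \emph{two} lines through $P$ (not necessarily generic) already isolates exactly the contribution over $P$. For a common root $(\alpha:\beta)$ of both $g_{\Dc_1}$ and $g_{\Dc_2}$, the point $\phi(\alpha:\beta)$ lies on $\Dc_1 \cap \Dc_2 = \{P\}$, so $\phi(\alpha:\beta) = P$; conversely every pre-image of $P$ is such a common root. Thus the two polynomials share exactly the roots $(\alpha_i:\beta_i)$, and it remains to identify the multiplicity with which $(\beta_i s - \alpha_i t)$ divides the gcd. Here I would argue that the multiplicity of $(\alpha_i:\beta_i)$ in $g_{\Dc_j}$ is the $i$-th branch's intersection multiplicity with $\Dc_j$ at $P$, which is $\geq m_i$, with equality for at least one of $j = 1, 2$ (since $\Dc_1, \Dc_2$ span the pencil of lines through $P$ and a branch cannot be tangent to two distinct lines of that pencil beyond order $m_i$). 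Taking the minimum over $j$ recovers exactly $m_i = \deg_i H_P$, so $\gcd(g_{\Dc_1}, g_{\Dc_2}) = H_P$ up to a nonzero constant.

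For the second equality, I would simply observe that $L_1(s,t;P)$ and $L_2(s,t;P)$ are obtained by evaluating the moving lines $L_1, L_2$ at the coordinates of $P$; since $L_1(s,t;x,y,z), L_2(s,t;x,y,z)$ cut out $\Gamma$ in $\PP^1 \times \PP^2$, their specializations at $P$ cut out $\Gamma \cap (\PP^1 \times \{P\})$, i.e. the fiber $\phi^{-1}(P)$ with its scheme structure. But that fiber is also exactly $V(g_{\Dc_1}) \cap V(g_{\Dc_2})$ as a subscheme of $\PP^1$, so $\gcd(L_1(s,t;P), L_2(s,t;P)) = \gcd(g_{\Dc_1}, g_{\Dc_2})$ up to units — alternatively one checks directly that the ideal $(L_1(s,t;P), L_2(s,t;P))$ in $R$ has the same radical and same primary structure at each point of $\PP^1$ as $(g_{\Dc_1}, g_{\Dc_2})$, both being one-dimensional complete intersections (or principal after saturation) whose unique "content" polynomial is $H_P$.

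I expect the main obstacle to be the multiplicity bookkeeping: rigorously relating the order of vanishing of $g_{\Dc}$ at a parameter value $(\alpha_i:\beta_i)$ to the branch multiplicity $m_i$, and proving that the minimum over the two chosen lines $\Dc_1, \Dc_2$ always achieves $m_i$ rather than something larger. This requires the local analysis of the map $\phi$ near each pre-image of $P$ — writing $\phi$ in a local parameter, expanding the linear forms $\Dc_j$ along the branch, and checking that two distinct lines of the pencil through $P$ cannot both be tangent to a given branch to order exceeding its multiplicity. Everything else (the set-theoretic identification of common roots, and the passage to $L_1(s,t;P), L_2(s,t;P)$) is essentially formal once the definition of $\Gamma$ from Proposition~\ref{prop:implicit}'s setup is in hand.
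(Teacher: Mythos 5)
The paper does not prove this proposition at all: it simply cites \cite[Proposition 2.1]{BD14}, so there is no internal argument to compare against. Judged on its own merits, your treatment of the \emph{first} equality is correct and essentially complete: common roots of $g_{\Dc_1}=a_0f_0+a_1f_1+a_2f_2$ and $g_{\Dc_2}$ map to $\Dc_1\cap\Dc_2=\{P\}$ (here you should note that $\phi$ is everywhere defined because $\gcd(f_0,f_1,f_2)=1$); the order of vanishing of $g_{\Dc_j}$ at a pre-image $(\alpha_i:\beta_i)$ equals the local intersection number of $\Dc_j$ with the $i$-th branch, hence is $\geq m_i$ with equality unless $\Dc_j$ is the tangent line of that branch; and since an analytically irreducible branch has a unique tangent line, at most one of $\Dc_1,\Dc_2$ can be tangent, so the gcd vanishes to order exactly $m_i$ there. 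That is a full proof of $H_P=\gcd(g_{\Dc_1},g_{\Dc_2})$.

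The thin spot is the second equality. You assert that $(L_1(s,t;P),L_2(s,t;P))$ cuts out the scheme-theoretic fiber $\phi^{-1}(P)$ and therefore has the same saturation as $(g_{\Dc_1},g_{\Dc_2})$, but this is precisely the content to be proved, not a formality: the ideal $(L_1,L_2)$ defines the symmetric algebra of $I$, and identifying its restriction over $P$ with the fiber of $\phi$ requires an argument. The missing ingredients are (a) the matrix $\psi(\alpha,\beta)$ has rank $2$ at \emph{every} point of $\PP^1$ (its $2$-minors are the $f_i$, which have no common zero), so $V(L_1,L_2)\to\PP^1$ has single reduced points as fibers and $V(L_1,L_2)$ really is the graph $\Gamma\cong\PP^1$ scheme-theoretically; and (b) a local multiplicity computation over a pre-image. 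For (b) the clean route is the syzygy identity $L_j(s,t;\phi(s,t))=0$: normalizing $z_P\neq 0$ and writing $X=f_0/f_2$, $Y=f_1/f_2$ in a local parameter $u$ at $(\alpha_i:\beta_i)$, one gets
$$L_j(u;P)=p_0^{(j)}(u)\bigl(x_P-X(u)\bigr)+p_1^{(j)}(u)\bigl(y_P-Y(u)\bigr),$$
which shows $\mathrm{ord}_u L_j(u;P)\geq m_i$; and since the two columns of $\psi$ at $u=0$ span the plane orthogonal to $(x_P,y_P,1)$, their projections to the first two coordinates span $\C^2$, so they cannot both annihilate the (nonzero) tangent vector $\bigl(\tilde X(0),\tilde Y(0)\bigr)$ of the branch, whence at least one $L_j(u;P)$ has order exactly $m_i$. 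With that inserted, your argument is a complete and self-contained proof of both equalities, which is more than the paper offers.
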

\begin{proof}
See for instance \cite[Proposition 2.1]{BD14}.
\end{proof}

\subsection{Free and nearly free curves}
In this section we consider a reduced curve $\Cc:F=0$ of degree $d$ in $\PP^2$, defined by a homogeneous polynomial $F \in S:=\CC[x,y,z]$.
Denote by $F_x$, $F_y$, $F_z$ the partial derivatives of $F$ with respect to the variables $x,y,z$, respectively. Let $J$ be the Jacobian ideal generated by $F_x,F_y,F_z$ and consider the graded $S$-module of Jacobian syzygies
$$AR(F)=\Syz(J)=\{(a,b,c)\in S^3 : aF_x+bF_y+cF_z=0\}.$$

\begin{defn} \label{freedef}
The curve $\Cc:F=0$ is free with exponents $d_1 \leq d_2$ if the $S$-graded module $AR(F)$ is free of rank two, and admits a basis $r_1=(r_{10},r_{11},r_{12})$, $r_2=(r_{20},r_{21},r_{22})$ with $\deg r_{ij}=d_i$, for $i=1,2$ and $j=0,1,2$.

\end{defn}

\begin{defn} \label{nfreedef}
The curve $\Cc:F=0$ is nearly free with exponents $d_1 \leq d_2$ if the $S$-graded module $AR(F)$ is generated by 3 syzygies $r_1=(r_{10},r_{11},r_{12})$, $r_2=(r_{20},r_{21},r_{22})$  and $r_3=(r_{30},r_{31},r_{32})$,
with $\deg r_{ij}=d_i$, for $i=1,2,3$ and $j=0,1,2$, where $d_3=d_2$, and such that the second order syzygies, i.e. the syzygies among the 3 relations $r_1,r_2,r_3$ are generated by a single relation
$$hr_1+\ell r_2+\ell' r_3=0,$$
with $\ell,\ell'$ independent linear forms and $h \in S_{d_2+1-d_1}$.
\end{defn}

If the curve $\Cc$ is free, resp.~nearly free, with exponents $(d_1,d_2)$ then it is known that $d_1+d_2=d-1$, resp.~$d_1+d_2=d$; see for instance \cite{DStFD,DStRIMS,Dmax}. This property is actually very similar to the property we used for defining the couple of integers $(\mu_1,\mu_2)$ of a rational curve. Nevertheless, it seems that there is no relation between this couple of integers and the couple $(d_1,d_2)$ of a free, or nearly free, rational curve. Here are some illustrative examples.

\begin{exmp} \label{ex2}
	Fix a degree $d>4$ and an integer $1<m_1<d/2$. Set $m_2=d-m_1-1$ and consider the rational cuspidal curve
	$$\Cc: F(x,y,z)=x^d+x^{m_1}y^{m_2+1}+y^{d-1}z=0.$$
	Then $\Cc$ is a  free curve with exponents $d_1=m_1$, $d_2=m_2$, see \cite[Thm. 1.1]{DStExpo}.
	A parametrization of the curve $\Cc$ is given by $f_0(s,t)=st^{d-1}$, $f_1(s,t)=t^d$ and $f_2(s,t)=-s^{m_1}(s^{m_2+1}+t^{m_2+1})$. It follows that $\mu_1=1$ and $\mu_2=d-1$.	
\end{exmp}

\begin{exmp} \label{ex2.5}
	Fix a degree $d \geq 4$ and an integer $0<m_1\leq d/2$. Set $m_2=d-m_1$ and consider the rational cuspidal curve
	$$\Cc: F(x,y,z)=x^d+x^{m_2+1}y^{m_1-1}+y^{d-1}z=0.$$
	Then $\Cc$ is a  nearly free curve with exponents $d_1=m_1$, $d_2=m_2$, see \cite[Thm. 1.2]{DStExpo}.
	A parametrization of the curve $\Cc$ is given by 
	$f_0(s,t)=st^{d-1}$, $f_1(s,t)=t^d$ and $f_2(s,t)=-s^{m_2+1}(s^{m_1-1}+t^{m_1-1})$. It follows that $\mu_1=1$ and $\mu_2=d-1$.	
\end{exmp}

\begin{exmp} \label{ex3}
	We discuss in this example the plane rational cuspidal curves $\Cc:f=0$ of degree $d=4$ following
	\cite[Section 3.2]{Moe}. The following cases may occur, up to projective equivalence.
	\begin{itemize}
		\item[i)] $\Cc$ has 3 singularities, all of them simple cusps $A_2$. Then a parametrization is given by $f_0=s^3t-s^4/2$, $f_1=s^2t^2$ and $f_2=t^4-2st^3$.
		\item[ii)] $\Cc$ has 2 singularities, one of type $A_2$, the other of type $A_4$. 
		In this case a parametrization is given by $f_0=s^4+s^3t$, $f_1=s^2t^2$ and $f_2=t^4$.
		\item[iii)] $\Cc$ has a unique singularity of multiplicity 2, which is of type $A_6$.
		The parametrization in this case is $f_0=s^4+st^3$,  $f_1=s^2t^2$ and $f_2=t^4$.
		\item[iv)] $\Cc$ has a unique singularity of multiplicity 3, which is of type $E_6$. Up to projective equivalence there are two possibilities for such a curve, call them $(A)$ and $(B)$, namely   
		
		$(A)$:   where $F=y^4-xz^3=0$,   $f_0=s^4$, $f_1=st^3$ and $f_2=t^4$, and
		
		$(B)$:    where $F=y^4-xz^3+y^3z=0$,   $f_0=s^3t+s^4$, $f_1=st^3$ and $f_2=t^4$.
		
	\end{itemize}	
	Using these parametrizations, it is clear that one has $\mu_1=2$ in the first 3 cases $i)$, $ii)$ and $iii)$, and $\mu_1=1$ in the last case $iv)$. It follows from \cite[Example 2.13]{DStRIMS} that in the first 3 cases $i)$, $ii)$ and $iii)$  $\Cc$ is a nearly free curve with exponents $d_1=d_2=2$. In the case $iv)$, $(A)$, the curve  $\Cc$ is nearly free with exponents $d_1=1$ and $d_2=3$ as in Example \ref{ex1} above, but in the case $iv)$, $(B)$, a direct computation shows that $\Cc$ is nearly free with exponents $d_1=d_2=2$.
	Therefore we have the equality $\mu_1=d_1$ in all cases except the case $iv)$, $(B)$.
	
\end{exmp}

The property of being free or nearly free for a curve $\Cc:F=0$ is strongly related to some properties of its corresponding Milnor (or Jacobian) algebra, which is the graded algebra $M(F)=S/J$. Recall that the global   Tjurina number of the curve $\Cc$, denoted $\tau(\Cc)$, is the degree of the Jacobian ideal $J$, that is $\tau(\Cc)= \dim M(f)_q$ for large enough $q$. Another important invariant of $M(F)$ is the minimal degree of a Jacobian syzygy of $F$: 
$$mdr(F) =\min \{q~:~AR(F)_q \neq 0\}.$$

\begin{thm} \label{thm4}
Let $\Cc:F=0$ be a reduced plane curve of degree $d$. Let $r=mdr(F)$, $\tau(d,r)=(d-1)^2-r(d-r-1)$ and let $\tau(\Cc)$ denote the global Tjurina number of the curve $\Cc$. Then the following properties hold.

	\begin{itemize}
		\item[i)] $\Cc$ is free if and only if $\tau (\Cc) = \tau (d,r)$, and in this case $r<d/2$.
		\item[ii)] $\Cc$ is nearly free if and only if $\tau (\Cc) = \tau (d,r)-1  $, and in this case $r \leq d/2$.

		\item[iii)] If $\Cc$ is neither free nor nearly free, then  $\tau (\Cc) <\tau (d,r)-1$.
	\end{itemize}	
\end{thm}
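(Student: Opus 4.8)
The plan is to exploit the minimal graded free resolution of the Milnor algebra $M(F) = S/J$, together with the numerical data encoded by $r = \mdr(F)$, following the now-standard circle of ideas developed by Dimca--Sticlaru (see \cite{DStFD,DStRIMS,Dmax}) which the excerpt already invokes. Since $J$ is generated in degree $d-1$ by three forms, $M(F)$ admits a graded minimal free resolution
\[
0 \to F_3 \to F_2 \to S^3(-(d-1)) \to S \to M(F) \to 0,
\]
and the module $N(F) = I_f/J$ (where $I_f$ is the saturation of $J$) measures precisely the failure of $J$ to be saturated. The starting point is to show that the lowest-degree syzygy controls the whole resolution: if $\rho = (a,b,c) \in AR(F)_r$ is a minimal syzygy and it does not generate a module of rank one together with any companion of degree $\le d-1-r$, then a dimension count forces $AR(F)$ to be generated by at most three elements, and the shape of $F_2, F_3$ is constrained in terms of $r$ and $d$. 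This is where the bound $r \le d/2$ naturally appears: if $r < d/2$ the second generator sits in degree $d-1-r > r$, and when $AR(F)$ is free of rank two with exponents $(d_1,d_2) = (r, d-1-r)$ one reads off $\tau(\Cc)$ directly from the resolution, obtaining $\tau(\Cc) = (d-1)^2 - d_1 d_2 = (d-1)^2 - r(d-1-r) = \tau(d,r)$, which is statement (i) in one direction.

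For the converse directions I would argue via the maximality of $\tau(d,r)$: the key inequality, due to du Plessis--Wall and refined by Dimca--Sticlaru, states that for any reduced plane curve of degree $d$ with $\mdr(F) = r$ one has
\[
\tau(\Cc) \le \tau(d,r),
\]
with equality exactly when $\Cc$ is free, and $\tau(\Cc) = \tau(d,r) - 1$ characterizing the nearly free case. So the proof reduces to: (a) establishing this sharp upper bound, which comes from analyzing the Hilbert function of $N(F)$ and using that $N(F)$ is a finite-length module satisfying a self-duality (Auslander--Buchsbaum / local duality) so that its Hilbert function is symmetric; and (b) identifying the extremal and subextremal cases with freeness and near-freeness respectively, by showing that $\dim N(F) = 0$ forces $AR(F)$ free (hence $\Cc$ free) while $\dim N(F) = 1$, i.e.\ $N(F) \ne 0$ of the minimal possible length, forces exactly the resolution shape in Definition \ref{nfreedef} with a single second-order syzygy $hr_1 + \ell r_2 + \ell' r_3 = 0$. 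Statement (iii) is then immediate: if $\Cc$ is neither free nor nearly free, $\dim N(F) \ge 2$ by the symmetry of its Hilbert function (the length cannot be exactly $1$ without being nearly free), and tracing this through the Hilbert function computation yields $\tau(\Cc) \le \tau(d,r) - 2 < \tau(d,r) - 1$. The constraints $r < d/2$ in the free case and $r \le d/2$ in the nearly free case drop out of the resolution: in the free case the two exponents are distinct ($d_1 + d_2 = d-1$ odd-split is impossible to force equality while $r$ minimal unless $r < d/2$), and in the nearly free case $d_1 = d_2 = d/2$ is permitted.

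The main obstacle will be part (b), the precise identification of the subextremal case $\tau(\Cc) = \tau(d,r) - 1$ with near-freeness: one must show not merely that $N(F)$ has length one but that this pins down the \emph{entire} minimal resolution to have exactly three first syzygies (two of degree $d_1 = r$ being impossible — rather one of degree $d_1$ and two of degree $d_2$) and a unique relation among them of the stated form. This requires a careful analysis of where the generators of $AR(F)$ can sit, ruling out alternative Betti tables with the same $\tau$, and is exactly the content that the cited papers \cite{DStRIMS,Dmax} handle; since those results are available to us by hypothesis, I would quote them for this step rather than reprove the Betti-table classification from scratch, and concentrate the argument on assembling the equivalences and extracting the numerical bounds on $r$.
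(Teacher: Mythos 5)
The paper offers no proof of this theorem at all: its entire argument is the citation ``See for instance \cite{DStFD,DStRIMS,Dmax,PW99}'', and your proposal ultimately does the same thing, explicitly quoting \cite{DStRIMS,Dmax} for the decisive step (the identification of the extremal value $\tau(d,r)$ with freeness and of the subextremal value $\tau(d,r)-1$ with near-freeness), so you are following essentially the same route. Your surrounding sketch of the du Plessis--Wall bound and the role of $N(F)$ is the correct circle of ideas, but two details are stated loosely and should not be taken at face value: near-freeness corresponds to $\dim N(F)_k\leq 1$ in \emph{every} degree $k$ (Theorem \ref{thm1}), not to $N(F)$ having total length one, so the jump from ``neither free nor nearly free'' to ``$\tau(\Cc)\leq\tau(d,r)-2$'' needs the graded, not the total, statement; and the bound $r<d/2$ in the free case follows simply from $2r\leq d_1+d_2=d-1$ (equal exponents $d_1=d_2=(d-1)/2$ are perfectly possible when $d$ is odd), not from any forced distinctness of the exponents.
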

\begin{proof}
See for instance \cite{DStFD,DStRIMS,Dmax,PW99}.
\end{proof}

Let $I$ be the saturation of the Jacobian ideal $J$ with respect to the  ideal $(x,y,z)$ in $S$. Another characterization of free and nearly free curves is obtained by examining the Hilbert function of the graded $S$-module $N(F)=I/J$.

\begin{thm} \label{thm1}
Let $\Cc:F=0$ be a reduced plane curve, then the following hold.
	\begin{itemize}
		\item[i)] $\Cc$ is free if and only if the Jacobian ideal is saturated, i.e. $N(F)=0$.
		\item[ii)] $\Cc$ is nearly free if and only if $N(F)\ne 0$ and $\dim N(F)_k \leq 1$ for any integer $k$.

	\end{itemize}	
\end{thm}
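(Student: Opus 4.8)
The plan is to derive both equivalences from the structure of the graded $S$-module $N(F) = I/J$ together with the numerical characterization provided by Theorem \ref{thm4}. First I would recall the self-duality property of $N(F)$: since $N(F)$ is the obstruction to the Jacobian ideal being saturated, it is a graded module of finite length whose Hilbert function is symmetric, $\dim N(F)_k = \dim N(F)_{T-k}$ where $T = 3(d-2)$ is the socle degree of a smooth curve (this follows from the perfect pairing induced by the local duality / the Koszul complex on the partials, see \cite{DStFD,DStRIMS}). Thus $N(F)$ is governed by a single number, and the two bullets of the theorem amount to saying $N(F) = 0$ in the free case and $\dim N(F)_k \le 1$ (with at least one nonzero component) in the nearly free case.

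For part (i), the statement that $\Cc$ is free if and only if $J$ is saturated is essentially definitional once one unwinds Definition \ref{freedef}: freeness says $AR(F)$ is a free $S$-module of rank two, which by the Hilbert–Burch mechanism is equivalent to $S/J$ having projective dimension two, i.e. to $J$ being a perfect ideal of codimension two, i.e. to $J$ being saturated (as $J$ and its saturation agree in codimension two precisely when the depth condition holds). So I would argue: $\Cc$ free $\iff$ $AR(F)$ free of rank two $\iff$ the minimal free resolution of $S/J$ has length two $\iff$ $\operatorname{depth} S/J = 1 \iff J^{\mathrm{sat}} = J \iff N(F) = 0$. This is the routine direction.

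For part (ii), the forward implication follows from the known resolution of $S/J$ for a nearly free curve (from Definition \ref{nfreedef}): the three generators $r_1, r_2, r_3$ of $AR(F)$ with the single second-order syzygy give a length-three resolution of $M(F)$, from which one computes the Hilbert function of $N(F)$ and reads off that each graded piece has dimension $\le 1$, and that $N(F) \ne 0$ since the resolution genuinely has length three. For the converse, the natural route is through Theorem \ref{thm4}: assuming $N(F) \ne 0$ and $\dim N(F)_k \le 1$ for all $k$, I would first rule out freeness by part (i), so $\tau(\Cc) < \tau(d,r)$ by Theorem \ref{thm4}(i); then I would show the hypothesis on $N(F)$ forces $\tau(\Cc) \ge \tau(d,r) - 1$, which by Theorem \ref{thm4}(iii) excludes the "neither free nor nearly free" case and hence, by elimination, yields that $\Cc$ is nearly free. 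The inequality $\tau(\Cc) \ge \tau(d,r)-1$ should come from expressing $\tau(\Cc)$ in terms of $\dim N(F)_k$ and the defect function $\operatorname{def}_k(J) = \dim (I/J)_k$, using that $\tau(d,r) - \tau(\Cc)$ equals the total defect $\sum_k \dim N(F)_k$ up to a symmetry argument that collapses the sum when each term is $0$ or $1$.

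The main obstacle I anticipate is the converse of (ii): establishing that the mere pointwise bound $\dim N(F)_k \le 1$ (rather than an a priori bound on the number of nonzero components) is enough to pin down the minimal resolution of $AR(F)$ to exactly the nearly-free shape. The symmetry of the Hilbert function of $N(F)$ is what makes this work — it forces the nonzero part of $N(F)$ to be concentrated in a short symmetric range — but combining this cleanly with the numerical dictionary of Theorem \ref{thm4} to extract the precise generator-and-syzygy structure of Definition \ref{nfreedef} is the delicate step, and I would lean on \cite{DStRIMS,Dmax} for the key comparison between $\dim N(F)_k$ and the Betti numbers of $AR(F)$.
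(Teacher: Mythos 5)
The paper does not actually prove this theorem: its ``proof'' is the single line ``See \cite{DStRIMS}'', so any genuine argument you supply is necessarily different from what the paper does, and the right benchmark is whether your argument stands on its own. Your part (i) does: freeness of $AR(F)$ as a rank-two free module is equivalent to $\operatorname{pd}_S(S/J)=2$, hence by Auslander--Buchsbaum to $\operatorname{depth}(S/J)=1$, i.e.\ to $H^0_{\mm}(S/J)=N(F)=0$; this is the standard route. The forward implication in (ii) is also fine in principle, since the length-three resolution encoded in Definition \ref{nfreedef} determines the Hilbert function of $M(F)$, hence of $N(F)$, and one reads off $\dim N(F)_k\le 1$ with $N(F)\neq 0$.

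The genuine gap is in the converse of (ii), exactly where you flag the difficulty. The numerical identity you lean on is false as stated: $\tau(d,r)-\tau(\Cc)$ is \emph{not} the total defect $\sum_k\dim N(F)_k$. For an honest nearly free curve, $N(F)$ is nonzero in a whole interval of degrees with each graded piece one-dimensional (see, e.g., the curves of Example \ref{ex1}), so that sum is typically much larger than $1$, while $\tau(d,r)-\tau(\Cc)=1$. The correct statement, which is \cite[Theorem 1.2]{Dmax}, is that for $r=mdr(F)<d/2$ one has $\tau(d,r)-\tau(\Cc)=\dim N(F)_{\lfloor T/2\rfloor}$ with $T=3(d-2)$: the defect is read off from the \emph{middle} graded piece alone. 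To convert your hypothesis $\dim N(F)_k\le 1$ into $\tau(\Cc)\ge\tau(d,r)-1$ you therefore need, besides the self-duality $\dim N(F)_k=\dim N(F)_{T-k}$, the fact that this Hilbert function is maximal at the middle degree (unimodality), which does not follow from symmetry alone and must be quoted from \cite{DStRIMS,Dmax}. With those two inputs your plan closes: $N(F)\neq 0$ and the bound force $\dim N(F)_{\lfloor T/2\rfloor}=1$, hence $\tau(\Cc)=\tau(d,r)-1$, and Theorem \ref{thm4} (ii)--(iii) yields near-freeness. You should also treat separately the range $2r\ge d$, where the middle-degree identity of \cite{Dmax} takes a different form; as written, the ``symmetry argument that collapses the sum'' is not an argument, and replacing it by the middle-degree identity plus unimodality is the actual content of the proof.
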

\begin{proof}
	See \cite{DStRIMS}.
\end{proof}

The degree at which the Hilbert function of the Milnor algebra starts to be equal to its Hilbert polynomial is of interest. It is called the {\it stability threshold} and it is denoted by
\begin{equation}\label{definv}
st(F)=\min \{q~:~\dim M(F)_k=\tau(\Cc) \text{ for all } k \geq q\}.
\end{equation}
Let 
$reg(F)$ denote the Castelnuovo--Mumford regularity of the Milnor algebra $M(F)$, regarded as a graded $S$-module, see \cite[Chapter 4]{Eis95}. Then, for any reduced plane curve $\Cc:F=0$, one has
\begin{equation}\label{CasMum}
 st(F)-1 \leq reg(F) \leq st(F),
\end{equation} 
 and the equality  $reg(F)=st(F)$ holds if and only if $\Cc:F=0$ is a free curve, see \cite[Theorem 3.4]{DIM}.
The next theorem is the main result of this section. We will need it in Section \ref{sec:globalTjurina}. It provides a sharp upper bound for the stability threshold of any rational plane curve of degree $d\geq 3$. It is an extension of a similar result that was proved for rational cuspidal curves with a different approach in   \cite[Proposition 3.6]{DStRIMS}.

\begin{thm} \label{thmmainA}
For any rational plane curve $\Cc$ of degree $d \geq 3$, one has $N(F)_{k}=0$ for all $k\leq d-3$ and $st(F) \leq 2d-3.$
\end{thm}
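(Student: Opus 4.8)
The plan is to control $N(F) = I/J$ through the Bernstein--Gelfand--Gelfand type comparison between the syzygies of the Jacobian ideal $J$ and the geometry of the singular points of $\Cc$, but routed through the parameterization $\phi$ rather than through $F$ directly. First I would recall the standard fact (see \cite{DStRIMS,DIM}) that $N(F)$ is a finite-length graded $S$-module which is self-dual up to a shift: $N(F)_k \cong N(F)_{T-k}^\vee$ where $T = 3(d-2) = 3d-6$ is the top degree of the Milnor algebra of a smooth curve. Consequently, vanishing of $N(F)$ in low degrees is equivalent to vanishing in high degrees, and the two assertions $N(F)_k = 0$ for $k \leq d-3$ and $st(F) \leq 2d-3$ are essentially the two faces of the same statement: indeed $N(F)_k = 0$ for $k \leq d-3$ forces, by the duality, $N(F)_k = 0$ for $k \geq (3d-6)-(d-3) = 2d-3$, and since $M(F)$ and $I$ agree in large degree with $\tau(\Cc)$ (and $I$ is saturated so $\dim I_k = \tau(\Cc)$ for $k \geq$ something small), this gives $\dim M(F)_k = \tau(\Cc)$ for $k \geq 2d-3$, i.e. $st(F) \leq 2d-3$. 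So the heart of the matter is the single claim $N(F)_k = 0$ for $k \leq d-3$.

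To prove that claim I would exploit rationality. Since $\Cc$ is rational, the curve has geometric genus $0$, so the arithmetic genus $(d-1)(d-2)/2$ is entirely accounted for by the $\delta$-invariants of the singularities: $\sum_P \delta_P = (d-1)(d-2)/2$. The module $N(F)$ and more precisely $\dim N(F)_k$ is constrained by the local Tjurina and the local cohomology of the Jacobian ideal at each singular point; the key input is the relation between $AR(F)$, the module $N(F)$, and the first syzygy module which, via \eqref{eq:mubasis} and Proposition \ref{prop:implicit}, is governed by $(\mu_1,\mu_2)$ with $\mu_1 + \mu_2 = d$. The concrete mechanism I would use is the following: a nonzero element of $N(F)_k$ for small $k$ would produce, via the identification of $N(F)$ with a piece of $\mathrm{Ext}$ or equivalently with the cokernel of the natural map from $AR(F)$-data, a Jacobian syzygy or a differential form of too-low degree relative to what the singularities of a rational curve allow. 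More precisely, I would pull back the global $2$-forms on $\PP^2$ (equivalently the Koszul-type complex on $F_x,F_y,F_z$) along $\phi$ to $\PP^1$; because $\phi$ is the normalization, the obstruction to such a pulled-back form being regular is localized at the (unibranch or multibranch) points over the singularities, and a degree count using $d = \deg\Cc$ shows there is simply no room for such an obstruction in degrees $\leq d-3$. This is exactly the local-to-global philosophy alluded to in the introduction ("a global analog of \cite[Theorem (1.1), claim (2)]{DGr}").

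Concretely the steps are: (1) set up the self-duality of $N(F)$ with the shift $3d-6$ and reduce $st(F) \leq 2d-3$ to $N(F)_{\leq d-3} = 0$; (2) express $N(F)_k$ as (a subquotient of) the cokernel of $\phi^*$ acting on an appropriate graded piece of polynomial differential forms, using that $\phi$ is generically one-to-one so $\phi^*$ is injective on $S$-homogeneous pieces in the relevant range; (3) bound the degree in which $\mathrm{coker}(\phi^*)$ can first be nonzero by the minimal degree $d - 3$ needed to "see" a singular point — here one uses that $\phi$ has degree $d$, that $f_0,f_1,f_2$ have no common factor, and that $N(F)$ is supported only at the origin so any element is killed by a power of $(x,y,z)$; (4) invoke $\sum_P \delta_P = (d-1)(d-2)/2$ (genus $0$) to exclude the remaining borderline case. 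The main obstacle, I expect, is step (2)–(3): making the identification of $N(F)_k$ with a cokernel of the form-pullback precise and uniform in $k$, and getting the bound $d-3$ rather than something weaker like $d-2$; this is where one must be careful about which graded twist of $\Omega^\bullet_{\PP^2}$ pulls back to which twist on $\PP^1$, and about the contribution of possibly several branches over a singular point. Once that bookkeeping is done, the degree estimate and the duality finish the argument; the sharpness of $2d-3$ can be checked on the examples of degree $d$ cuspidal curves already exhibited (e.g. Examples \ref{ex2}, \ref{ex2.5}), where $N(F)$ is nonzero precisely in a degree forcing $st(F)$ close to $2d-3$.
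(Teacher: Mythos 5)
Your overall architecture matches the paper's: the bound $st(F)\leq 2d-3$ is indeed deduced from the low-degree vanishing $N(F)_{\leq d-3}=0$ via the duality of $N(F)$ with shift $D=3(d-2)$ (the paper does this through the identity $\dim N(F)_k=\dim M(F)_k+\dim M(F)_{D-k}-\dim M(F_s)_k-\tau(\Cc)$ from \cite{DStRIMS}, together with $\dim M(F)_{D-k}=\dim M(F_s)_{D-k}=\dim M(F_s)_k$ for $k\geq 2d-3$), and rationality enters exactly where you say it should. But the heart of the theorem --- the proof that $I_{d-3}=0$ --- is missing from your proposal, and what you offer in its place does not work as stated. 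Your steps (2)--(3) propose to realize $N(F)_k$ as a subquotient of $\mathrm{coker}(\phi^*)$ on differential forms and then rule out low-degree elements by ``a degree count showing there is no room for an obstruction''; no such count is supplied, and you yourself flag the bookkeeping as an unresolved obstacle. Worse, the identification of the relevant graded pieces with cokernels of the pull-back on forms is precisely the content of the later Theorems \ref{thmforms1} and \ref{thm2forms}, whose proofs in the paper \emph{use} Theorem \ref{thmmainA} as an input, so this route, if made precise along those lines, would be circular.

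The missing idea is the Poincar\'e residue. A polynomial $h\in S_{d-3}$ lies in $I_{d-3}$ exactly when its germ at each singular point $P$ lies in the local Tjurina ideal $(g,g_u,g_v)$; for such $h$ the rational $2$-form $h\Omega/F$ (of total degree $0$ precisely because $\deg h=d-3$ --- this is where the exponent $d-3$ comes from, not from a count of $\delta$-invariants) has residue a regular section of $\Omega^1_{\Cc}$, not merely of the dualizing sheaf. The resulting map $\alpha\colon I_{d-3}\to H^0(\Cc,\Omega^1_{\Cc})$ is injective, and so is its composition with $n^*$ into $H^0(\PP^1,\Omega^1_{\PP^1})$, because $\ker n^*$ is torsion supported at the singular points. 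Since $H^0(\PP^1,\Omega^1_{\PP^1})=H^0(\PP^1,\Oc_{\PP^1}(-2))=0$, one gets $I_{d-3}=0$, hence $N(F)_k=I_k/J_k=0$ for all $k\leq d-3$. Your appeal to $\sum_P\delta_P=(d-1)(d-2)/2$ is not the operative genus-zero input; what is used is the vanishing of $H^0(\Omega^1)$ on the normalization. Finally, your closing step of the reduction (``$\dim (S/I)_k=\tau(\Cc)$ for $k\geq$ something small'') needs justification in the specific range $k\geq 2d-3$; the cited formula supplies this uniformly, whereas your sketch leaves it open.
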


\begin{proof} 
A homogeneous polynomial $h \in S_m$ belongs to $I_{m}$ if and only if for each singular point $P \in \Cc$ and any local equation
$g(u,v)=0$ of the germ $(\Cc,P)$, one has that the germ of $h$ at $P$,
i.e.~the germ of $h/l^m$, where $l \in S_1$ does not vanish at $P$, belongs to the local Tjurina ideal $T_g=(g,g_u,g_v)$ of $g$ at $P$.
Here $T_g$ is an ideal in the local ring $\Oc_{\PP^2,P}$.
Let 
$$\Omega=x\d y\wedge \d z-y\d x\wedge \d z + z \d x\wedge \d y,$$
and take  $m=d-3$. Then the rational differential form on $\PP^2$
$$\omega(h)= \frac{h \Omega}{F}$$
has a residue $\alpha(h)=\Res(\omega(h))$ which belongs to $H^0(\Cc,\Omega^1_{\Cc})$. Indeed, in the local coordinates $(u,v)$ used above,
$\omega(h)$ can be written as 
$$\frac{\d g \wedge \omega_1+ g \omega_2}{g}$$
where $\omega_j \in \Omega^j_{\PP^2,P}$. This formula implies that
$$\alpha(h)=\omega_1|\Cc$$
locally at $P$. This construction gives rise to an injective morphism
\begin{equation}
\label{eq1}
\alpha:I_{m} \to H^0(\Cc,\Omega^1_{\Cc}).
\end{equation} 
The injectivity comes from the fact that the residue of a form with poles of order at most 1 is trivial exactly when the form is regular. Let $\phi: \PP^1 \to \Cc$ be the normalization morphism and note that
\begin{equation}
\label{eq2}
n^*(\Oc_{\Cc})=\Oc_{\PP^1}.
\end{equation}
Note that the kernel of the morphism
$$ n^*: H^0(C,\Omega^1_{\Cc}) \to H^0(\PP^1,\Omega^1_{\PP^1})$$
consists only of torsion elements, supported at the singular points of $\Cc$. It follows that the composition morphism
\begin{eqnarray*}
	I_{m} & \to & H^0(\PP^1,\Omega^1_{\PP^1})\\ 
	h & \mapsto & n^*(\alpha(h))
\end{eqnarray*}
is also injective. 
Since $H^0(\PP^1,\Omega^1_{\PP^1})=H^0({\PP^1},\Oc_{\PP^1}(-2))=0$ it follows that $I_{d-3}=0$ (this vanishing is sharp; see Example \ref{ex1}). 
This proves the first claim, since $N(F)_m=I_m/J_m$. Then we use the equality 
$$\dim N(F)_k=\dim M(F)_k+\dim M(F)_{D-k}-\dim M(F_s)_k -\tau(\Cc),$$
where $F_s$  a homogeneous polynomial in $S$ of the same degree $d$ as $F$ and such that $\Cc_s:F_s=0$ is a smooth curve in $\PP^2$, and $D=3(d-2)$, see \cite[Formula (2.8)]{DStRIMS}. When $k\geq 2d-3$, then $N(F)_k=0$ as explained in the proof of  \cite[Proposition 3.6]{DStRIMS}, and moreover
$$\dim M(F)_{D-k}=\dim M(F_s)_{D-k}=\dim M(F_s)_{k},$$
which completes the proof.
\end{proof}

The upper bound $st(F) \leq 2d-3$ given in Theorem \ref{thmmainA} is sharp, as it is shown in the following example.

\begin{exmp} \label{ex1} Fix a degree $d\geq 3$ and an integer $0<m_1<d/2$ such that $\gcd (m_1,d)=1$. Set $m_2=d-m_1$ and consider the rational cuspidal curve
	$$\Cc: F(x,y,z)=x^{m_1}y^{m_2}-z^d=0.$$ A parametrization of $\Cc$ is given by $f_0(s,t)=s^d$, $f_1(s,t)=t^d$ and $f_2(s,t)=s^{m_1}t^{m_2}$.
	This curve is a nearly free curve with exponents $d_1=1$, $d_2=d-1$, see \cite[Proposition 2.12]{DStRIMS}. Moreover we have the equality $st(F) =2d-3$ by applying \cite[Theorem 2.8 (ii)]{DStRIMS}. 
\end{exmp}

We emphasize that the upper bound given in Theorem \ref{thmmainA} can be improved if the rational curve $\Cc$ is moreover assumed to be free or nearly free.  Indeed, for an irreducible plane  curve $\Cc$, which is  free with exponents $(d_1,d_2)$, we have that $st(F)=2d-4-d_1 \leq 2d-6$ (see \cite[Theorem 2.5]{DStFD} and use the fact that necessarily $d_1\geq 2$). Similarly, if $\Cc$ is  nearly free with exponents $(d_1,d_2)$, then $st(F)=2d-2-d_1 \leq 2d-3$ (see \cite[Theorem 2.8]{DStRIMS} and note that $d_1 \geq 1$ in this case).

\medskip

To conclude this section, we give some examples that shed light on the relations between rational curves, free curves, and nearly free curves. We begin with the most intriguing fact about free and nearly free curves: 
\begin{conj}\label{conj}
	Any rational cuspidal curve in the plane is either free or nearly free.
\end{conj}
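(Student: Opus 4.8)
The plan is to reduce the conjecture to a numerical inequality for the global Tjurina number and then to attack that inequality with the local theory of the cusps. Let $\Cc:F=0$ be a rational cuspidal curve of degree $d$; it is in particular reduced, so set $r=mdr(F)$ and $\tau(d,r)=(d-1)^2-r(d-r-1)$. The three cases of Theorem~\ref{thm4} are mutually exclusive and exhaustive, so $\Cc$ is free or nearly free if and only if
$$\tau(\Cc) \geq \tau(d,r)-1.$$
Equivalently, through Theorem~\ref{thm1}, one may instead prove that $\dim_\C N(F)_k \leq 1$ for every $k$, where $N(F)=I/J$. Here the results already at hand are helpful: Theorem~\ref{thmmainA} gives $N(F)_k=0$ for $k\leq d-3$, while the formula for $\dim N(F)_k$ recalled in its proof, combined with the Gorenstein symmetry $\dim_\C M(F_s)_k=\dim_\C M(F_s)_{3(d-2)-k}$ of the smooth model $\Cc_s$, yields the self-duality $\dim_\C N(F)_k=\dim_\C N(F)_{3(d-2)-k}$. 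Thus $N(F)$ is concentrated in the band $d-2\leq k\leq 2d-4$ with symmetric Hilbert function, and the whole problem is the pointwise estimate $\dim_\C N(F)_k\leq 1$ inside this band.

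Next I would feed in the cuspidal hypothesis through local-to-global numerics. As $\Cc$ is rational, the genus formula gives $\sum_P \delta_P=(d-1)(d-2)/2$, and as every singular point is unibranch, Milnor's relation $\mu_P=2\delta_P$ yields $\sum_P\mu_P=(d-1)(d-2)$. Writing $\tau(\Cc)=\sum_P\tau_P$ and using $\tau_P\leq\mu_P$, we obtain
$$\tau(\Cc)=(d-1)(d-2)-\sum_P(\mu_P-\tau_P).$$
Hence the desired inequality $\tau(\Cc)\geq\tau(d,r)-1$ is equivalent to a bound on the total Tjurina defect,
$$\sum_P(\mu_P-\tau_P)\leq (d-1)(d-2)-\tau(d,r)+1=r(d-r-1)-(d-2).$$
The right-hand side depends only on $r=mdr(F)$, whereas the left-hand side measures how far the cusps are from being quasi-homogeneous and vanishes exactly when all of them are. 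The heart of the conjecture is therefore an inequality tying the minimal syzygy degree $r$ to the non-quasi-homogeneity of the singular points.

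The main obstacle, and the reason the statement is still only a conjecture, is precisely this control of $\sum_P(\mu_P-\tau_P)$ by $r$. Two steps appear unavoidable. First, one must read $r$ off from the local invariants of the cusps, which should proceed through the pole-order (Hodge) filtration on $H^2(\PP^2\setminus\Cc)$, whose graded quotients are governed by the spectra, and hence by the Puiseux/semigroup data, of the individual singularities. Second, one must show that for a \emph{rational} curve these defects cannot accumulate beyond the budget $r(d-r-1)-(d-2)$; even the quasi-homogeneous case is delicate, since when $r=1$ the budget is $0$, forcing every cusp to be weighted homogeneous and the curve to be nearly free rather than free. A sensible line of attack is to settle the quasi-homogeneous case first, where $\tau(\Cc)=(d-1)(d-2)$ makes $N(F)$ as small as its symmetry allows, and then to estimate the defect produced by a single non-quasi-homogeneous branch, testing the resulting inequalities against the explicit families of Examples~\ref{ex1} and~\ref{ex3} before attempting the general bound.
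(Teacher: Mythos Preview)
The statement you are attempting is a \emph{conjecture}, and the paper does not prove it; it merely records it, notes that it is established ``in most of the cases'', and points to \cite{DStRIMS} and \cite{DS18new} for the known partial results. So there is no ``paper's own proof'' to compare against.

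Your proposal is not a proof either, and you say as much. The reductions you carry out are correct and are essentially the standard reformulation used in the literature: the equivalence between freeness/near-freeness and $\tau(\Cc)\geq\tau(d,r)-1$ is exactly Theorem~\ref{thm4}; the computation $\sum_P\mu_P=(d-1)(d-2)$ for a rational cuspidal curve is right; and the resulting inequality
\[
\sum_P(\mu_P-\tau_P)\ \leq\ r(d-r-1)-(d-2)
\]
is indeed the form in which the problem is attacked in \cite{DStRIMS,DS18new}. You have also correctly located the missing ingredient: one must bound the total Tjurina defect in terms of $r=mdr(F)$, and no general mechanism for this is known.

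One small muddle at the end: what you call ``the quasi-homogeneous case'' is not delicate at all. If every cusp is quasi-homogeneous then the left-hand side above is $0$, and since an irreducible curve of degree $\geq 2$ always has $r\geq 1$ (otherwise $F$ would be a cone), the right-hand side $r(d-r-1)-(d-2)$ is nonnegative for all $1\leq r\leq d-2$, so the conjecture holds immediately. What \emph{is} delicate is the converse direction you slide into: the case $r=1$ forces the defect to vanish, hence forces every cusp to be quasi-homogeneous, and proving that implication (``$r=1\Rightarrow$ all cusps quasi-homogeneous'') is a genuine statement about the curve, not a tautology. Keep these two directions separate when you write up the plan.
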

Recall that a rational curve is said to be cuspidal if it has only unibranch singularities. This class of curves is extremely rich and has been studied extensively. Conjecture \ref{conj} is proved in most of the cases; see \cite{DStRIMS} and \cite{DS18new} for the details.

In the next examples, we show that there exist rational curves, which are not cuspidal, that are neither free or nearly free. We also show that there exist rational free curves and nearly free curves that are not cuspidal. Examples showing that there exist free curves and nearly free curves that are not rational curves are given in \cite{AGL17}.

\begin{exmp} \label{ex4a} We discuss in this example some  families of rational curves, which are not cuspidal. 
	\begin{itemize}		
		\item[i)] 
		Consider the curve $\Cc: F=x^d+(x^{d-1}+y^{d-1})z=0,$
		for any $d \geq 3$. The curve $\Cc$ has a unique singular point at $P=(0:0:1)$, of multiplicity
		$m_P=d-1$ and having $r_P=d-1$ branches. It is easy to see that $r=mdr(F)=2$ for any $d\geq 3$. This curve $\Cc$ is neither free, nor nearly free, as can be seen using Theorem \ref{thm4}.
		The parametrization of $\Cc$ is given by:
		$f_0=s(s^{d-1}+t^{d-1})$, $f_1=t(s^{d-1}+t^{d-1})$ and $f_2= -s^d$.
		
		\item[ii)] Consider the curve $\Cc: x^9(x+y)+y^7(x^3+y^3)+xy^8z=0$.
		Then $\Cc$ is free with exponents $(d_1,d_2)=(4,5)$, see \cite{Nan}.				
		This curve has a unique singularity at $P=(0:0:1)$ with $r_P=3$ branches. The parametrization of $\Cc$ is given by 
		$f_0=s^2t^8$, $f_1=st^9$ and $f_2=-[s^9(s+t)+t^7(s^3+t^3)]$.	
					
		\item[iii)] Consider the curve $\Cc: x^{13}+y^{13}+x^2y^8(x+5y)^2z=0.$ 
		This curve is nearly free with exponents 
		$(d_1,d_2)=(5,8)$ and has a unique singularity at $P=(0:0:1)$ with $r_P=3$ branches. The parametrization of $\Cc$ is given by 
		$f_0=s^3t^8(s+5t)^2$, $f_1=s^2t^9(s+5t)^2$ and $z=-(s^{13}+t^{13})$.
	\end{itemize}
\end{exmp}

	We summarize the situation in Figure \ref{setdiagram}. It suggests the two following questions for a given rational curve: how can we decide if the curve is cuspidal ? and how can we decide if the curve is free or nearly free ? We will answer these questions in Section \ref{sec:dualcurve} and Section \ref{sec:globalTjurina} respectively, directly from the parameterization of the curve, without relying on the computation of an implicit equation. 

\begin{figure}[h!]
	\centering
	\includegraphics[width=0.7\linewidth]{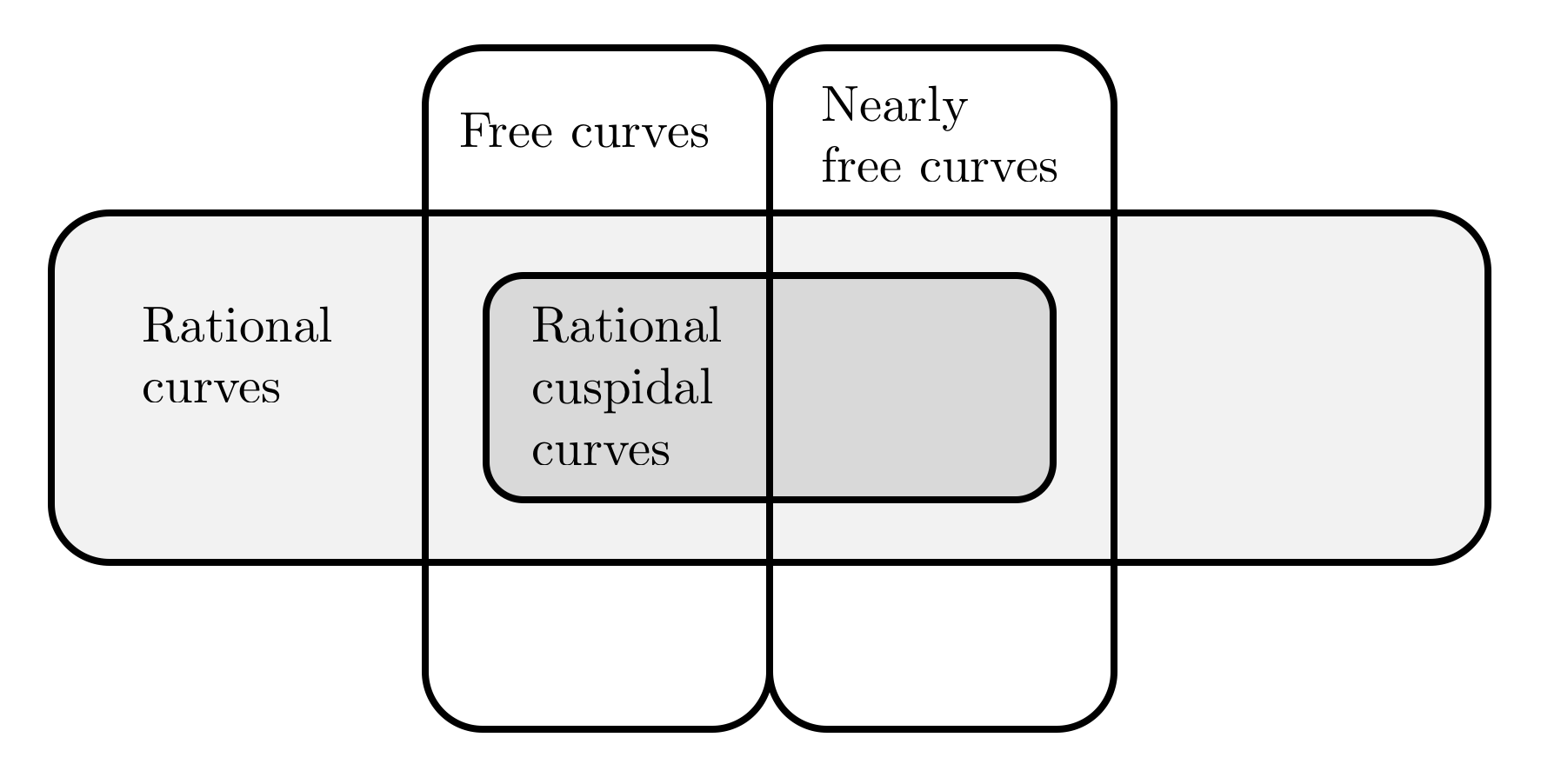}
	\caption{Set diagram illustrating the relations between rational, rational cuspidal, free and nearly free algebraic plane curves, assuming that Conjecture \ref{conj} holds.}
	\label{setdiagram}
\end{figure}

\section{Dual curves, singularities and rational cuspidal curves}\label{sec:dualcurve}

In this section, we gather results that allow to determine if a rational curve is cuspidal.  We first study the dual curve $\Cc^\vee$ of a rational curve $\Cc$. In particular, we prove new methods for computing an  implicit equation and a parameterization of $\Cc^{\vee}$ from a parameterization of $\Cc$. Then, we give an alternative interpretation of a characterization of rational cuspidal curves that appeared in \cite{BGI}.

\subsection{Dual curve of a rational curve}

Let $\Cc:F=0$ be a reduced plane curve and consider the associated {\it  polar mapping}
\begin{eqnarray} 
\label{gradient}
\nabla_F: \PP^2 & \dasharrow & (\PP^2)^{\vee},  \\\nonumber
(x:y:z) & \mapsto & (F_x(x,y,z):F_y(x,y,z):F_z(x,y,z)),
\end{eqnarray} 
where $(\PP^2)^{\vee}$ denotes the dual projective plane, parameterizing the lines in $\PP^2$. The polar map $\nabla_F$ is defined on $\Cc$ except at the singular points of $\Cc$, and the closure of the image of the smooth part of $\Cc$ under $\nabla_F$ is the dual curve of $\Cc$, denoted by $\Cc^{\vee}$.

Let $d_\nabla$ be the degree of $\nabla_F$. The classical \emph{degree formula}  yields the equality
\begin{equation}\label{eq:degpol1}
d_\nabla=(d-1)^2-\sum_P\mu_P
\end{equation}
that is valid for any plane curve $\Cc$, see \cite{Dpol, DP1,Fass}. Using the classical formula for the geometric genus 
\begin{equation}\label{eq:genus}
p_g(\Cc)=\frac{(d-1)(d-2)}{2}-\sum_P\delta_P
\end{equation}
and the Milnor formula 
$\mu_P=2 \delta_P-r_P+1$, we deduce that 
\begin{equation}\label{eq:degpol2}
d_\nabla =d-1+ 2p_g(\Cc)+\sum_P(r_P-1).
\end{equation}
This clearly implies the following property (see also \cite[Theorem 3.1]{Fass}).
\begin{cor} \label{cor:degpol}
	Assume that $\Cc:F=0$ is a plane curve of degree $d$. Then $\Cc$ is rational cuspidal if and only if $d_\nabla=d-1$, otherwise $d_\nabla>d-1$.
\end{cor}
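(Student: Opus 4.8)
The plan is to read Corollary \ref{cor:degpol} directly off the two formulas that precede it, namely the genus-corrected degree formula \eqref{eq:degpol2}
$$d_\nabla = d-1 + 2p_g(\Cc) + \sum_P (r_P-1),$$
together with the elementary observation that each term on the right beyond $d-1$ is non-negative. Since $p_g(\Cc) \geq 0$ for any irreducible curve and $r_P \geq 1$ for every singular point $P$ (a singular point has at least one branch), we always have $d_\nabla \geq d-1$. The content of the corollary is then the case of equality.

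The key step is to analyze when equality $d_\nabla = d-1$ holds. From \eqref{eq:degpol2} this forces simultaneously $p_g(\Cc) = 0$ and $r_P - 1 = 0$ for every singular point $P$, i.e. $r_P = 1$ for all $P$. The first condition says $\Cc$ has geometric genus zero; combined with the standing assumption that $\Cc$ is irreducible (it carries a parametrization $\phi:\PP^1\to\PP^2$ by hypothesis, and more generally one argues via the normalization), genus zero means $\Cc$ is rational. The second condition, $r_P=1$ at every singular point, is precisely the definition of a cuspidal curve (all singularities unibranch). Hence $d_\nabla = d-1$ if and only if $\Cc$ is rational and cuspidal. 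Conversely, if $\Cc$ is rational cuspidal, then $p_g(\Cc)=0$ and $\sum_P(r_P-1)=0$, so \eqref{eq:degpol2} gives $d_\nabla = d-1$; and if $\Cc$ is not rational cuspidal, then at least one of the two non-negative contributions is strictly positive, so $d_\nabla > d-1$.

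There is essentially no obstacle here: the corollary is a formal consequence of \eqref{eq:degpol2}, which in turn was derived from the degree formula \eqref{eq:degpol1}, the genus formula \eqref{eq:genus}, and the Milnor formula $\mu_P = 2\delta_P - r_P + 1$. The only point that merits a word of care is the irreducibility/reducedness hypothesis: \eqref{eq:genus} and the notion of geometric genus presume $\Cc$ irreducible, which is consistent with the convention in the paper that $\Cc$ is the image of a parametrization; if one wished to treat the reducible reduced case one would replace $p_g$ by the sum of the genera of the components plus a correction for the number of components, but this does not affect the non-negativity argument and the stated equivalence still reads off directly. I would therefore present the proof simply as: the inequality $d_\nabla \geq d-1$ and the characterization of equality both follow at once from \eqref{eq:degpol2}, reproducing the one-line computation above.
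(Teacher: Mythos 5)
Your proposal is correct and is exactly the argument the paper intends: the corollary is stated as an immediate consequence of \eqref{eq:degpol2}, and your observation that $p_g(\Cc)\geq 0$ and $r_P\geq 1$ force $d_\nabla\geq d-1$ with equality precisely when $\Cc$ is rational and unibranch at every singular point is the whole content. The remark about irreducibility is a sensible extra word of care but does not change the route.
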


The degree the dual curve $\Cc^\vee$ is also well known. For $P \in \Cc$ a singular point, let $m_P$ denote the multiplicity of the singularity $(\Cc,P)$ and let $r_P$ denote the number of branches at $P$.
\begin{prop} \label{propcor1}
Let $\Cc:F=0$ be a reduced plane curve of degree $d$ which is rational. Then the degree $d^{\vee}$ of the dual curve $\Cc^{\vee}$ is given by
$$d^{\vee}= 2(d-1)-\sum_P(m_P-r_P),$$
where the sum is over all the singular points $P$ of $\Cc$.  In particular, if $\Cc$ is a rational cuspidal curve, then
$$d^{\vee}= 2(d-1)-\sum_P(m_P-1).$$
\end{prop}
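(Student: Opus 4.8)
The plan is to derive the degree $d^\vee$ of the dual curve from the genus formula applied to $\Cc^\vee$, together with standard facts linking the local invariants of $\Cc$ and $\Cc^\vee$. Recall the Pl\"ucker-type formula expressing $d^\vee$ as the intersection number of $\Cc$ with a generic polar curve, minus the contribution of the singular points; equivalently, $d^\vee$ equals the number of tangent lines to $\Cc$ passing through a generic point of $\PP^2$. First I would write $d^\vee = d(d-1) - \sum_P e_P$, where $e_P$ is the local intersection multiplicity at $P$ of $\Cc$ with a generic polar, i.e.\ the drop in the class coming from the singularity at $P$; for a singularity with Milnor number $\mu_P$ and $r_P$ branches one has $e_P = \mu_P + m_P - 1$ (this is the classical formula, e.g.\ via $e_P = (\Cc\cdot \Cc^{\rm pol}_Q)_P$ and the computation of the polar intersection in terms of $\mu_P$ and $m_P$).

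Next I would combine this with the genus formula \eqref{eq:genus}: since $\Cc$ is rational, $p_g(\Cc)=0$, so $(d-1)(d-2)/2 = \sum_P \delta_P$, and by the Milnor formula $\mu_P = 2\delta_P - r_P + 1$ we get $\sum_P \mu_P = (d-1)(d-2) - \sum_P(r_P - 1)$. Substituting into $d^\vee = d(d-1) - \sum_P(\mu_P + m_P - 1)$ yields
\begin{equation*}
d^\vee = d(d-1) - (d-1)(d-2) + \sum_P(r_P-1) - \sum_P(m_P - 1) = 2(d-1) - \sum_P(m_P - r_P),
\end{equation*}
which is exactly the claimed formula. The cuspidal case is then immediate: if $\Cc$ is rational cuspidal then every $r_P = 1$, so $d^\vee = 2(d-1) - \sum_P(m_P - 1)$.

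Alternatively, and perhaps more transparently, I would deduce it from the already-recorded formula \eqref{eq:degpol2} for $d_\nabla$ by noting that $d^\vee = d_\nabla \cdot \deg(\nabla_F|_{\Cc})$ when $\nabla_F$ restricted to $\Cc$ is birational onto $\Cc^\vee$ (which holds for $\Cc$ irreducible and not a line), and then use $d_\nabla = d-1 + 2p_g(\Cc) + \sum_P(r_P-1)$; but this still requires relating $d_\nabla$ to $d^\vee$ via the multiplicities, so the bookkeeping is essentially the same. I would therefore favor the first route, citing the standard references (\cite{Dpol, DP1, Fass}) for the local polar formula $e_P = \mu_P + m_P - 1$ rather than reproving it.

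The main obstacle is pinning down and correctly justifying the local contribution $e_P = \mu_P + m_P - 1$ at a non-nodal singular point: one must be careful that this is the intersection multiplicity of $\Cc$ with a \emph{generic} polar (not the first polar at a fixed point, and not the Jacobian), and that it accounts simultaneously for the loss of smooth points of $\Cc$ near $P$ (the $\mu_P$ part, from the polar of a generic point meeting the branches) and for the $m_P-1$ extra tangent lines absorbed by the singularity. Once this local formula is in hand, the rest is the genus computation above, which is routine.
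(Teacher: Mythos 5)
Your main argument is correct, and it is essentially the standard proof underlying the references the paper cites (the paper itself offers no proof of Proposition \ref{propcor1}, only pointers to \cite[Proposition 1.2.17]{book2} and \cite{Kl77}). The chain is sound: the class formula $d^\vee = d(d-1) - \sum_P e_P$ with the Teissier local polar intersection number $e_P = \mu_P + m_P - 1$ at each singular point, combined with $p_g(\Cc)=0$, the genus formula \eqref{eq:genus} giving $\sum_P \delta_P = (d-1)(d-2)/2$, and the Milnor--Jung formula $\mu_P = 2\delta_P - r_P + 1$, yields $\sum_P \mu_P = (d-1)(d-2) - \sum_P(r_P-1)$ and hence $d^\vee = 2(d-1) - \sum_P(m_P - r_P)$; the arithmetic checks out, as do sanity tests (nodal cubic $\to$ quartic, cuspidal cubic $\to$ cubic, tricuspidal quartic $\to$ cubic). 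Two caveats. First, the references you propose for the local formula $e_P = \mu_P + m_P - 1$ are not the right ones: \cite{Dpol,DP1,Fass} concern the global polar degree $d_\nabla$ of the map \eqref{gradient}, not the local intersection of $\Cc$ with a generic polar; you should cite Teissier or \cite{book2} for that, and you also implicitly use that the Gauss map is birational onto $\Cc^\vee$ (true in characteristic zero by reflexivity, as the paper notes after \eqref{eq:psi}), so that the smooth intersection points of $\Cc$ with the generic polar count the tangent lines through the generic point without multiplicity. Second, your ``alternative'' route is wrong as stated: $d^\vee$ is \emph{not} $d_\nabla \cdot \deg(\nabla_F|_{\Cc})$ --- $d_\nabla$ counts preimages under $\nabla_F$ of a generic point of $(\PP^2)^\vee$, while $d^\vee$ is the intersection of $\Cc^\vee$ with a generic line; already for a smooth conic one has $d_\nabla = 1$ and $d^\vee = 2$. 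Since you explicitly discard that route in favor of the first, this does not affect the validity of your proof, but the remark should be deleted or corrected.
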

\begin{proof}
	See \cite[Proposition 1.2.17]{book2} or \cite{Kl77}.
\end{proof}

Since we are interesting in the dual curve $\Cc^\vee$, it is quite natural to consider the intersection of $\Cc$ with a line $L: ux+vy+wz=0$ in $\PP^2$. The following result shows that the discriminant of the pull-back of $L$ via $\phi$ allows to recover an equation of the dual curve $\Cc^\vee$ and some information about the singularities of $\Cc$.

\begin{thm} \label{thmrdc}
	Let $\phi:\PP^1 \to \PP^2$ be a parametrization of the plane curve $\Cc$ of degree $d\geq 2$. Each singular point of $\Cc$ corresponds to a line  in $(\PP^2)^\vee$ whose equation will be denoted by $L_P(u,v,w)$. We also denote by $F^\vee(u,v,w)$ an equation of the dual curve $\Cc^\vee$ of $\Cc$. Then, the discriminant of the polynomial $uf_0+vf_1+wf_2$ admits the following factorization into irreducible polynomials
	$$\Disc(uf_0+vf_1+wf_2)=c \cdot F^\vee \cdot \prod_{P} {L_P}^{m_P-r_P} \in \CC[u,v,w]$$
	where $c\in \CC\setminus\{0\}$.
\end{thm}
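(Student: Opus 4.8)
The plan is to compute the discriminant $\Disc(uf_0+vf_1+wf_2)$ by identifying its zero locus in $(\PP^2)^\vee$ together with the multiplicity along each component, and then match this data against the claimed factorization. Fix a generic line $L_{(u:v:w)}: ux+vy+wz=0$ in $\PP^2$; its pull-back via $\phi$ is the binary form $g_{(u:v:w)}(s,t) = uf_0+vf_1+wf_2 \in R$ of degree $d$. By definition, $\Disc(g)$ vanishes at $(u:v:w)$ exactly when $g_{(u:v:w)}$ has a multiple root in $\PP^1$, i.e.\ when the line $L$ meets $\Cc$ at a point $P$ with higher contact than transversal. There are two geometric sources of such tangency: either $L$ is tangent to $\Cc$ at a smooth point (this is precisely the condition defining the dual curve $\Cc^\vee$), or $L$ passes through a singular point $P$ of $\Cc$ — because at a singular point several branches of $\Cc$ meet, and the pull-back $H_P$ from \eqref{eq:invpol} has degree $m_P$ while it accounts for only $r_P$ distinct preimages, forcing multiple roots of $g$ as soon as $(u:v:w)$ lies on the line $L_P$ dual to $P$. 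This already shows that $\Disc(uf_0+vf_1+wf_2)$ is divisible by $F^\vee$ and by each $L_P$; the content of the theorem is the precise exponent $m_P-r_P$ on $L_P$ and the exponent $1$ on $F^\vee$, plus the fact that nothing else divides it.

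The main computation is to determine the order of vanishing of $\Disc(g)$ along the line $L_P$ for each singular point $P$. The tool for this is a local analysis of the discriminant as a function of the coefficients $(u,v,w)$: write $g_{(u:v:w)} = H_P(s,t)^{\,?} \cdot (\text{rest})$ is not literally true, but near a point $(u:v:w)\in L_P$ generic on that line, the form $g_{(u:v:w)}$ has, at each of the $r_P$ preimages $(\alpha_i:\beta_i)$ of $P$, a root of multiplicity exactly $m_i$ (the branch multiplicity), whereas for $(u:v:w)$ off $L_P$ these roots split into $m_i$ simple roots clustered near $(\alpha_i:\beta_i)$. Using the standard product formula $\Disc(g) = \pm\,\mathrm{lc}(g)^{2d-2}\prod_{i<j}(\rho_i-\rho_j)^2$ over the roots $\rho_k$, one tracks how many of the pairwise differences $\rho_i-\rho_j$ vanish to first order as $(u:v:w)\to L_P$. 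At a preimage contributing a factor $(\beta_i s-\alpha_i t)^{m_i}$ of $H_P$, the local picture is a single root of multiplicity $m_i$ deforming; generically (for a generic point of $L_P$, which is all we need since $L_P$ is irreducible) this cluster of $m_i$ roots becomes $m_i$ simple roots and the contribution to the order of vanishing of $\Disc$ is $m_i - 1$, by the classical fact that $\Disc$ of a one-parameter family with a single root of multiplicity $m_i$ at the special value vanishes to order $m_i-1$ there. Summing over the $r_P$ branches gives $\sum_{i=1}^{r_P}(m_i-1) = m_P - r_P$, which is exactly the claimed exponent. The transversality of the $r_P$ branches to each other at $P$ for generic $L$ through $P$ ensures there is no extra vanishing from roots at distinct preimages colliding.

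It remains to show that $F^\vee$ appears with multiplicity one and that there are no further factors, and finally to pin down the constant. For the multiplicity-one claim on $F^\vee$: at a generic point of $\Cc^\vee$ — which corresponds to a simple bitangent-free tangent line $L$ touching $\Cc$ at one smooth point with ordinary tangency — the form $g_{(u:v:w)}$ has exactly one double root and all others simple, so $\Disc(g)$ vanishes to order $1$ there, hence $F^\vee$ occurs to the first power. That $\Disc(g)$ has no other irreducible components follows from a degree count: by Lemma~\ref{proplem1} our convention makes $\phi$ birational with $d = \deg\Cc$, and $\Disc$ of a binary form of degree $d$ in coefficients depending linearly on $(u,v,w)$ is a homogeneous polynomial of degree $2(d-1)$ in $(u,v,w)$; on the other hand $\deg F^\vee = d^\vee = 2(d-1)-\sum_P(m_P-r_P)$ by Proposition~\ref{propcor1}, and $\sum_P(m_P-r_P)\cdot 1 = \sum_P(m_P-r_P)$ is exactly the degree of $\prod_P L_P^{m_P-r_P}$, so $\deg F^\vee + \sum_P(m_P-r_P) = 2(d-1) = \deg\Disc(g)$; thus the claimed factors already account for the full degree and there is no room for anything else. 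The nonzero constant $c$ then absorbs the leading-coefficient normalization in the discriminant formula. I expect the delicate point to be the local multiplicity computation along $L_P$ in the presence of several branches of possibly different multiplicities — making rigorous that, for a \emph{generic} point of $L_P$, each branch of multiplicity $m_i$ contributes exactly $m_i-1$ and the branches do not interact — so I would either invoke the semicontinuity of the discriminant together with an explicit local model $g \sim \prod_i (\beta_i s - \alpha_i t)^{m_i} + \epsilon(\cdots)$, or reduce it to the one-branch case treated in \cite{DGr} and sum.
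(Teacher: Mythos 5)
Your proposal is correct and follows essentially the same route as the paper: identify the two types of components of the discriminant locus (tangent lines at smooth points giving $F^\vee$, pencils through singular points giving the $L_P$), control the multiplicity along each $L_P$ by a local analysis at a generic point of that line, and close the argument with the degree count $2(d-1)=d^\vee+\sum_P(m_P-r_P)$. The only real difference is that you compute the \emph{exact} local multiplicities (and rightly flag this as the delicate step), whereas the paper only establishes the lower bounds $\alpha\geq 1$ and $\beta_P\geq m_P-r_P$ and lets the degree count force equality --- so the delicate part of your argument can be relaxed to an inequality and becomes immediate.
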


\begin{proof}
Consider a line $L: ax+by+cz=0$ in $\PP^2$. The set-theoretic intersection $L \cap \Cc$ consists of $|L \cap \Cc|<d$ points in one of the following two cases.

\begin{itemize}
	\item[i)] 	The line $L$ passes through one of the singular points of the curve $\Cc$. Each singular point $P\in \Cc$ gives rise to a line of such lines $L$ in the dual projective plane $(\PP^2)^{\vee}$.	We get in this way a finite set of lines $L_i$.

	\item[ii)] The line $L$ meets the curve $\Cc$ only at smooth points of $\Cc$, and it is tangent to $\Cc$ at least at one of these points. Then the line $L$ corresponds to a point on the dual curve $\Cc^{\vee}$, which is an irreducible curve in the dual projective plane $(\PP^2)^{\vee}$.
	
\end{itemize}

Now, as the condition $|L \cap \Cc|<d$ is a necessary
condition to the fact that the discriminant $\Disc(af_0+bf_1+cf_2)$ vanishes, we deduce that 
$$\Disc(uf_0+vf_1+wf_2)=c \cdot {F^\vee}^{\alpha} \cdot \prod_{P} {L_P}^{\beta_P}$$
where the integer $\alpha$ and $\beta_P$ are to be determined. It is clear that $\alpha \geq 1$. Now, take a general point on a line $L_P$. This point corresponds to a general line that goes through the singular point $P$ and this line intersects its branch curve $\zeta$ at $P$ with multiplicity $m_\zeta$ and its first derivative with multiplicity $m_\zeta-1$. By the properties of the resultant, we get that  $\beta_P\geq \sum_{\zeta}(m_\zeta -1)=m_P-r_P$ (where the sum runs over all the irreducible branch curve $\zeta$ at $P$). To conclude the proof, we observe that $\Disc(uf_0+vf_1+wf_2)$ is a homogeneous polynomial of degree $2(d-1)$, that $L_P$ are homogeneous linear forms and that $F^\vee$ is a homogeneous polynomial of degree 
$d^\vee=2(d-1)-\sum_P(m_P-r_P)$, so that we necessarily have that $\alpha=1$ and that $\beta_P=m_P-r_P$ for all singular point $P$ of $\Cc$.
\end{proof}

The dual curve $\Cc^\vee$ is the image of $\PP^1$ under the composition  $\nabla_F\circ \phi$. It is hence a rational curve. Below, we give two parameterizations of $\phi$, a result that we will use later on in Section \ref{sec:globalTjurina}. 
To begin with, consider the three homogeneous polynomials
$$g_0(s,t)=F_x(\phi(s,t)), \ g_1(s,t)=F_y(\phi(s,t)), \ g_2(s,t)=F_z(\phi(s,t))$$
and set $g_i=hg_i'$ where $h$ is the greatest common divisor of $g_0,g_1,g_2$ in $R$. Then the composition $\nabla_F\circ \phi$ gives the regular map 
\begin{eqnarray}\label{eq:psi}
	\psi :  \PP^1 & \to & (\PP^2)^\vee \\ \nonumber
	(s:t) & \mapsto & (g_0'(s:t):g_1'(s:t):g_2'(s:t))
\end{eqnarray}
whose image is the dual curve $\Cc^\vee$. Moreover, $\psi$ is generically one-to-one, because $\phi$ and the dual mapping $\nabla_F: \Cc \to \Cc^{\vee}$ have both this property. Indeed, the fact that the dual mapping
$\nabla_F: \Cc \to \Cc^{\vee}$ is generically one-to-one in the case of characteristic zero is a classical fact, while the case of characteristic $p>0$ is definitely more complicated, see for both claims the survey \cite[Section (1.1)]{Kaji}.
Therefore, using Proposition \ref{proplem1} we see that 
$$\deg g_i'= d^{\vee}= 2(d-1)-\sum_P(m_P-r_P).$$
Moreover, since $\deg g_i=d(d-1)$ we deduce that 
$$\deg h= \deg g_i-\deg g_i'=(d-1)(d-2)+\sum_P(m_P-r_P)$$
where the sum is taken over all the singular points $P$ of $\Cc$.

\medskip

Another approach to produce a parameterization of $\Cc^\vee$ is to interpret the vanishing of the discriminant that appears in Theorem \ref{thmrdc} as the existence of a common root to the two partial derivatives of the polynomial $uf_0+vf_1+wf_2$ with respect to $s$ and $t$. For that purpose, consider the Jacobian matrix of $\phi$:
$$J(\phi):=
\left(
\begin{array}{ccc}
	\partial_{s}f_0 & \partial_{s}f_1 & \partial_{s}f_2\\
	\partial_{t}f_0 & \partial_{t}f_1 & \partial_{t}f_2
\end{array}
\right).
$$
The first syzygy module of this matrix is free of rank one. Let $m_{ij}$ be the 2-minor of the matrix $J(\phi)$ obtained by using the partial derivatives of $f_i$ and $f_j$. Let also $A=\gcd (m_{01}, m_{02},m_{12})$ and set $m_{ij}'=m_{ij}/A$.
Let us denote by $(f_0^\vee,f_1^\vee,f_2^\vee)$ the vector $(m_{12}',-m_{02}',m_{01}')$ in $R^3$. Then clearly $\sigma= (f_0^\vee,f_1^\vee,f_2^\vee)$ is a generator of the first syzygy module of $J(\phi)$.

\begin{thm} 
\label{thm52}
The homogeneous polynomials $f_0^\vee,f_1^\vee,f_2^\vee$ are of degree $d^\vee$ and they define a generically $1$-to-$1$ parameterization $\phi^\vee:\PP^1\rightarrow (\PP^2)^\vee$ of the dual curve $\Cc^\vee$.
\end{thm}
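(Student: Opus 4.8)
The plan is to identify the parametrization $\phi^\vee = (f_0^\vee : f_1^\vee : f_2^\vee)$ built from the $2$-minors of $J(\phi)$ with the geometrically meaningful map $\psi = (g_0' : g_1' : g_2')$ coming from the polar map $\nabla_F$, which was already shown to be a generically $1$-to-$1$ parametrization of $\Cc^\vee$ with $\deg g_i' = d^\vee$. Once the equality $\phi^\vee = \psi$ (as maps $\PP^1 \dashrightarrow (\PP^2)^\vee$) is established, both conclusions of the theorem follow: the degree statement because $\deg f_i^\vee = \deg g_i' = d^\vee$ by the argument preceding the theorem, and the generic injectivity because it holds for $\psi$.

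First I would record the geometric meaning of $\sigma = (f_0^\vee, f_1^\vee, f_2^\vee) = (m_{12}', -m_{02}', m_{01}')$: at a generic $(s:t) \in \PP^1$, this vector spans the kernel of $J(\phi)(s,t)$, i.e.\ the unique (up to scalar) linear form $(u:v:w)$ such that $u\,\partial_s f_0 + v\,\partial_s f_1 + w\,\partial_s f_2 = 0$ and the same with $\partial_t$. Dehomogenizing, say $t=1$, and writing $\phi$ in the affine chart, the kernel of the Jacobian of the affine curve parametrization $s \mapsto (f_0/f_2, f_1/f_2)(s)$ is exactly the line tangent to $\Cc$ at $\phi(s)$ — this is the standard description of the tangent line of a parametrized curve. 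Hence $\sigma(s:t)$ represents the tangent line to $\Cc$ at the generic point $\phi(s:t)$, which is by definition the point $\nabla_F(\phi(s:t))$ of the dual curve. The same description applies to $\psi$: by construction $\psi(s:t) = \nabla_F(\phi(s:t))$ for generic $(s:t)$. Therefore $\phi^\vee$ and $\psi$ agree generically on $\PP^1$, hence as rational maps, hence (since both are given by gcd-one triples of the same degree $d^\vee$, up to a global constant) they coincide.

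A slightly more self-contained variant avoids invoking the geometric picture: differentiate the identity $F(f_0,f_1,f_2) \equiv 0$ with respect to $s$ and $t$ to get $F_x(\phi)\,\partial_s f_0 + F_y(\phi)\,\partial_s f_1 + F_z(\phi)\,\partial_s f_2 = 0$ and likewise for $\partial_t$. This exhibits $(g_0, g_1, g_2) = (F_x(\phi), F_y(\phi), F_z(\phi))$ as a syzygy of $J(\phi)$; since the first syzygy module of $J(\phi)$ is free of rank one with generator $\sigma$, we get $(g_0, g_1, g_2) = P \cdot (f_0^\vee, f_1^\vee, f_2^\vee)$ for some $P \in R$. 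Comparing with $g_i = h\, g_i'$ and using that both $(g_i')$ and $(f_i^\vee)$ have gcd one, we conclude $P = h$ up to a nonzero constant and $g_i' = f_i^\vee$ up to the same constant. This gives $\phi^\vee = \psi$ directly and transfers both the degree count $\deg f_i^\vee = d^\vee$ and the generic $1$-to-$1$ property.

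The main obstacle, and the only point requiring genuine care, is justifying that the first syzygy module of $J(\phi)$ is free of rank one with $\sigma$ a generator, and that $\gcd(m_{01}, m_{02}, m_{12}) = A$ is exactly the right factor so that no spurious common factor is hidden — equivalently, that the triple $(f_0^\vee, f_1^\vee, f_2^\vee)$ really has gcd one and degree $d^\vee$ rather than something smaller. The freeness of the syzygy module follows from the Hilbert--Burch structure (the $2 \times 3$ matrix $J(\phi)$ has rank $2$ generically since $\phi$ is non-constant, and its $2$-minors $m_{ij}$ are not all zero), and the degree bookkeeping is forced once $g_i = h f_i^\vee$ up to a constant is known, because $\deg g_i = d(d-1)$ and $\deg h = (d-1)(d-2) + \sum_P (m_P - r_P)$ were computed just above, leaving $\deg f_i^\vee = d^\vee$. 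So the write-up reduces to: (i) the two differentiation identities; (ii) the rank-one syzygy module argument; (iii) the gcd/degree comparison; (iv) reading off generic injectivity from that of $\psi$.
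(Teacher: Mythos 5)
Your proposal is correct; your first variant is essentially the paper's argument, while your second, algebraic variant is a genuinely different route. The paper proceeds geometrically and directly: for $Q$ with $P=\phi(Q)$ smooth, the rows of $J(\phi)$ at a lift $\tilde Q$ span the plane $V=d\tilde\phi^\vee_{\tilde Q}(T_{\tilde Q}\C^2)$ whose projectivization is $T_P\Cc$, and since $\sigma=(f_0^\vee,f_1^\vee,f_2^\vee)$ generates the syzygy module of $J(\phi)$ it gives the linear equation of $V$; hence $\phi^\vee(Q)$ is the point of $(\PP^2)^\vee$ corresponding to $T_P\Cc$, generic injectivity follows because a generic tangent line touches $\Cc$ at a unique point and $\phi$ is generically $1$-to-$1$, and the degree claim then drops out of Lemma \ref{proplem1}. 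Your first sketch is this same argument, modulo a loose phrase: $\sigma$ spans the annihilator of the tangent plane (equivalently the kernel of $J(\phi)$ acting on linear forms), not ``the tangent line'' itself. Your second variant instead differentiates $F(f_0,f_1,f_2)\equiv 0$ to exhibit $(F_x(\phi),F_y(\phi),F_z(\phi))$ as a syzygy of $J(\phi)$, then uses that this syzygy module is free of rank one with generator $\sigma$ together with a gcd comparison to get $f_i^\vee=c\,g_i'$; in effect you prove Remark \ref{rk55} first and then import the degree count and generic injectivity already established for $\psi$ in the discussion preceding the theorem. This buys a chart-free, purely algebraic identification of the two parametrizations (which the paper only records afterwards, in Remark \ref{rk55}, as a by-product of its geometric proof), and it costs nothing essential, since the facts you borrow about $\psi$ (generic injectivity of $\nabla_F|_{\Cc}$ and of $\phi$, and $\deg \Cc^\vee=d^\vee$) are exactly those underlying the paper's direct argument. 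Two small points to tighten in a write-up: the rank-one freeness of the syzygy module of $J(\phi)$ is not really a Hilbert--Burch statement but the elementary fact that the kernel of a generically rank-$2$ map $R^3\to R^2$ over the UFD $R=\C[s,t]$ is generated by the reduced vector of signed $2$-minors (the paper asserts this without proof as well); and you should note that $(F_x(\phi),F_y(\phi),F_z(\phi))\neq 0$, which holds because $F$ is reduced, so that the proportionality factor $P$ and hence $h$ are nonzero.
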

\begin{proof} 
In view of Proposition \ref{proplem1}, it is enough to show that the image of $\phi^\vee$ is the dual curve $\Cc^\vee$ and that the induced mapping $\phi^\vee:\PP^1\rightarrow \Cc^\vee$ has degree 1. Let $Q \in \PP^1$ be a point such that $P=\phi(Q)$ is a smooth point on $\Cc$. Consider the natural projections $\pi_1: \C^2 \setminus \{0\} \to \PP^1$, and respectively, $\pi_2: \C^3 \setminus \{0\} \to \PP^2$. Note that the homogeneous polynomials $f_0^\vee,f_1^\vee,f_2^\vee$ induce a mapping $\tilde \phi^\vee:\C^2 \setminus \{0\} \to \C^3 \setminus \{0\}$, such that 
$$\pi_2\circ \tilde \phi^\vee= \phi^\vee \circ \pi_1.$$
Let $\tilde Q \in \C^2$ be a point such that $\pi_1(\tilde Q)=Q$ and define $\tilde P=\tilde \phi^\vee (\tilde Q)$. Then one has $\pi_2(\tilde P)=P$ and the tangent space $T_P\Cc$ is just the image under $d\pi_2=\pi_2$ of $V=d \tilde \phi^\vee_{\tilde Q}(T_{\tilde Q}\C^2)$.
Now $V$ is a plane in $\C^3$ spanned by the two vectors 
$$v_i=(\partial_{x_i}f_0((\tilde Q), \partial_{x_i}f_1((\tilde Q),  \partial_{x_i}f_2((\tilde Q)),$$
for $i=0,1$. By the above discussion, it follows that the equation of this plane $V$ in $\C^3$, and hence of the corresponding line $\pi_2(V)=T_P\Cc$, is given by
$$f_0^\vee(\tilde Q)x+f_1^\vee(\tilde Q)y+f_2^\vee(\tilde Q)z=0.$$
Therefore the image of $\phi^\vee$ is the dual curve $\Cc^\vee$. To see that the induced mapping $\phi^\vee:\PP^1\rightarrow \Cc^\vee$ has degree 1, note that a generic point 
$$L:ax+by+cz=0$$
in $\Cc^\vee$ is a line tangent to $\Cc$ at a unique point $P_L\in \Cc$. Since $\phi$ is a generically $1$-to-$1$ parametrization, it follows that there is a unique point $Q_L$ such that
$\phi(Q_L)=P_L$. This shows that $(\phi^\vee)^{-1}(L)=Q_L$, and hence the induced mapping $\phi^\vee:\PP^1\rightarrow \Cc^\vee$ has degree 1.
\end{proof}

\begin{rem}\label{cor53}
As a direct consequence of Theorem \ref{thm52}, the degree of the greatest common divisor $A$ of the 2-minors of the Jacobian matrix $J(\phi)$ is equal to $\sum_P (m_P-r_P)$. 
\end{rem}

\begin{rem} \label{rk55}
The two parameterizations of the dual curve $\Cc^\vee$, 
namely the map $\psi$ defined by \eqref{eq:psi} and the parametrization  $\phi^\vee$ from Theorem \ref{thm52} coincide. Indeed, for a point $Q \in \PP^1$  such that $P=\phi(Q)$ is a smooth point on $\Cc$, we have shown above that $\psi(Q)=\phi^\vee(Q)=(a:b:c)$, where
$$T_P\Cc:ax+by+cz=0.$$
\end{rem}

We conclude this section with the following observations. Assume that $\Cc$ is a plane curve which is cuspidal of type $(m_1,m_2)$ as defined in Example \ref{ex1}. Then it is easy to see that the associated dual curve $\Cc^\vee$  is also cuspidal of type $(m_1,m_2)$. In particular, the two curves $\Cc$ and $\Cc^\vee$ are projectively equivalent in this case. However, such a result is not general. Indeed, let $\Cc$ be a curve from Examples \ref{ex2} and \ref{ex2.5}. 
Then the corresponding dual curve $\Cc^\vee$ obtained in this case for small values of $d$, say $d=5$ and $m_1=2$, is neither free, nor nearly free, so it cannot be projectively equivalent to the curve $\Cc$. These examples also show that the dual of a free or nearly free curve can be neither free, nor nearly free. We notice that it was known that the class of rational cuspidal curves is also not closed under duality: for instance the dual of the quartic with 3 cusps $A_2$ is known to be a nodal cubic, which by the way is neither free nor nearly free.

\subsection{Singular points of a rational curve} 
Cuspidal curves are characterized by the fact that they are unibranch at all singular points. In order to exploit this property, we describe a method that allows to determine the singular points of a rational curve by means of their pull-back polynomials. This method is based on results that appeared in the paper \cite{BGI}, but for which we propose a new interpretation.

\medskip

Given a point $p=(x:y:z) \in \PP^2$, we recall that its pull-back polynomial with respect to the parameterization $\phi$ is denoted  by $H_p(s,t)\in \CC[s,t]$ and is defined by \eqref{eq:invpol}. 

\begin{lem}\label{lem:invfactor} If $p$ is a point on $\Cc$ of multiplicity $k\geq 1$ then there exists two $\CC$-linearly independent syzygies 
	$$ h_1H_p+\alpha_0f_0+\alpha_1f_1+\alpha_2f_2=0, \ h_2H_p+\beta_0f_0+\beta_1f_1+\beta_2f_2=0$$
	of the ideal $(H_p,f_0,f_1,f_2) \subset \CC[s,t]$ such that $\deg(h_i)=d-k$, $\alpha_i,\beta_j \in \CC$ and $h_1$ and $h_2$ are coprime polynomials. In particular, 
	$$H_p=\gcd(\alpha_0f_0+\alpha_1f_1+\alpha_2f_2,\beta_0f_0+\beta_1f_1+\beta_2f_2)$$
	up to the multiplication of a nonzero constant.
	
	Conversely, if there exists a polynomial $H(s,t) \in \CC[s,t]$ of degree $k$, $1\leq k\leq d-1$, and two  $\CC$-linearly independent syzygies of $(H,f_0,f_1,f_2) \subset \CC[s,t]$  as above, then $H(s,t)$ divides the pull-back polynomial $H_p$ of the singular point $p$ whose coordinates are the projectivisation of the vector $(\alpha_0,\alpha_1,\alpha_2)\wedge (\beta_0,\beta_1,\beta_2)$ and whose multiplicity is greater or equal to $k$.
\end{lem}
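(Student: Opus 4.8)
The plan is to prove both directions of Lemma~\ref{lem:invfactor} by relating syzygies of the ideal $(H_p, f_0, f_1, f_2)$ in $R = \CC[s,t]$ to the linear system of lines in $\PP^2$ passing through the point $p$.

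\textbf{The forward direction.} Suppose $p = (x_p : y_p : z_p) \in \Cc$ has multiplicity $k = m_p(\Cc) \geq 1$, so that $\deg(H_p) = k$ by \eqref{eq:invpol}. Pick two distinct lines $\Dc_1: \alpha_0 x + \alpha_1 y + \alpha_2 z = 0$ and $\Dc_2: \beta_0 x + \beta_1 y + \beta_2 z = 0$ through $p$; since $p$ determines a line in $(\PP^2)^\vee$, the coefficient vectors $(\alpha_0,\alpha_1,\alpha_2)$ and $(\beta_0,\beta_1,\beta_2)$ are $\CC$-linearly independent and their cross product recovers the coordinates of $p$. By Proposition~\ref{prop0}, $H_p = \gcd(\alpha_0 f_0 + \alpha_1 f_1 + \alpha_2 f_2,\ \beta_0 f_0 + \beta_1 f_1 + \beta_2 f_2)$ up to a nonzero scalar. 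Writing $\alpha_0 f_0 + \alpha_1 f_1 + \alpha_2 f_2 = -h_1 H_p$ and $\beta_0 f_0 + \beta_1 f_1 + \beta_2 f_2 = -h_2 H_p$, the degree count gives $\deg h_i = d - k$, and the fact that $H_p$ is exactly the gcd forces $\gcd(h_1, h_2) = 1$. The two resulting syzygies $h_i H_p + \sum_j (\cdot) f_j = 0$ are $\CC$-linearly independent because a $\CC$-linear dependence among them would force a $\CC$-linear dependence between $(\alpha_0,\alpha_1,\alpha_2)$ and $(\beta_0,\beta_1,\beta_2)$ (using that $H_p \neq 0$ and $f_0,f_1,f_2$ are linearly independent over $\CC$, which holds since $d \geq 1$ and $\Cc$ is not a line — or more carefully, since $h_1, h_2$ are not proportional). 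This establishes the first assertion, and the displayed gcd equality is then just Proposition~\ref{prop0} restated.

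\textbf{The converse direction.} Now suppose $H(s,t)$ has degree $k$ with $1 \leq k \leq d-1$ and we are given two $\CC$-linearly independent syzygies $h_1 H + \sum_j \alpha_j f_j = 0$ and $h_2 H + \sum_j \beta_j f_j = 0$ with $\deg h_i = d - k$, $\alpha_j, \beta_j \in \CC$, and $\gcd(h_1, h_2) = 1$. First I would argue that $(\alpha_0,\alpha_1,\alpha_2)$ and $(\beta_0,\beta_1,\beta_2)$ are $\CC$-linearly independent: a dependence would give a $\CC$-combination $\lambda_1(h_1 H) + \lambda_2(h_2 H) = 0$ with $(\lambda_1,\lambda_2) \neq 0$, hence $\lambda_1 h_1 + \lambda_2 h_2 = 0$, contradicting $\gcd(h_1,h_2)=1$ (as $h_1, h_2$ then would be proportional) — unless the original syzygies were themselves dependent, which we excluded. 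So let $p$ be the point of $\PP^2$ with coordinate vector $(\alpha_0,\alpha_1,\alpha_2) \wedge (\beta_0,\beta_1,\beta_2)$, i.e.\ the intersection of the two lines $\Dc_1, \Dc_2$ they define. The relations say that $H$ divides both $\ell_1 := \sum_j \alpha_j f_j$ and $\ell_2 := \sum_j \beta_j f_j$, hence $H \mid \gcd(\ell_1, \ell_2)$. By Proposition~\ref{prop0}, $\gcd(\ell_1,\ell_2) = H_p$ up to a nonzero constant, so $H \mid H_p$. In particular $H_p \neq 0$ is nonconstant (since $\deg H \geq 1$), which means $p$ lies on $\Cc$ with $m_p(\Cc) = \deg H_p \geq \deg H = k$; if $k \geq 2$ this forces $p$ to be a singular point of $\Cc$, and for $k = 1$ one still has the stated conclusion with the multiplicity inequality.

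\textbf{Main obstacle.} The routine algebra (degree counts, cross products, divisibility) is straightforward; the one point that needs genuine care is the \emph{linear independence bookkeeping} — translating ``$\CC$-linearly independent syzygies'' into ``linearly independent coefficient vectors'' and back, and handling the degenerate interplay with $\gcd(h_1,h_2)=1$. The subtle case is when $H$ is \emph{not} squarefree or does not divide $H_p$ with full multiplicity; here the correct statement is only divisibility $H \mid H_p$, not equality, and the multiplicity of $p$ is bounded below (not computed exactly) — so I would be careful to phrase the conclusion as an inequality $m_p(\Cc) \geq k$ rather than an equality, exactly as in the statement. Another place to be slightly careful is ensuring $f_0, f_1, f_2$ are $\CC$-linearly independent (equivalently $\Cc$ is not contained in a line), which is automatic under the running assumption that $\Cc$ has degree $d \geq 2$; for $d = 1$ the lemma is vacuous since $\Cc$ has no singular points.
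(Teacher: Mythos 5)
Your proposal is correct and follows essentially the same route as the paper: both directions reduce to choosing (or recovering) two independent lines through $p$ and invoking Proposition~\ref{prop0} to identify $H_p$ with the gcd of the two pulled-back linear forms, the forward direction reading off $h_1,h_2$ and their coprimality from that gcd, and the converse deducing $H\mid H_p$ and hence $m_p(\Cc)\geq k$. Your version merely makes explicit the linear-independence bookkeeping that the paper leaves implicit.
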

\begin{proof}
	Denote by $(x:y:z)$ the coordinates of $p$. One can find two lines in the plane that intersect at $p$, and only at $p$. Denote by $\alpha_0x+\alpha_1y+\alpha_2z=0$ and $\beta_0x+\beta_1y+\beta_2z=0$ the equations of these two lines. By pulling-back these two equations through $\phi$ we get the two polynomials $\alpha_0f_0+\alpha_1f_1+\alpha_2f_2$ and $\beta_0f_0+\beta_1f_1+\beta_2f_2$ that must vanish at all the roots of $H_p(s,t)$ with the same multiplicity, and the first claim follows. 
	
	The proof of the converse is very similar. From the two syzygies we get two equations of lines, namely $\alpha_0x+\alpha_1y+\alpha_2z=0$ and $\beta_0x+\beta_1y+\beta_2z=0$, that are linearly independent and hence intersect solely at the point $p$. Then, since $H$ divides the gcd of $\alpha_0f_0+\alpha_1f_1+\alpha_2f_2$ and $\beta_0f_0+\beta_1f_1+\beta_2f_2$, we deduce that $H$ divides $H_p$ from the first claim.
\end{proof}

The above lemma suggests to introduce the following maps. Let $k$ be an integer such that $1\leq k \leq d-1$ and denote by 
$$P_k(s,t):=u_0s^k+u_1s^{k-1}t+u_2s^{k-2}t^2+\cdots+u_kt^k$$
the generic homogeneous polynomial of degree $k$ in the variable $s,t$. We set $\AA:=\CC[u_0,\ldots,u_k]$ the ring of coefficients over $\CC$ and $R^g:=\AA[s,t]$. Consider the graded map
$$\rho_k : R^g(-k)\oplus R^g(-d)^3 \xrightarrow{(P_k,f_0,f_1,f_2)} R^g.$$
Since we are interested in its syzygies of degree $d$, we denote by $\MM_k$ the matrix of $(\rho_k)_d$ in the canonical polynomial basis:
$$\MM_k:=
\left(
\begin{array}{cccccccc}
u_0	& 0   & 0   & \ldots & 0   & a_{0,0} & a_{1,0} & a_{2,0} \\
u_1	& u_0 & 0   & \ldots & 0   & a_{0,1} & a_{1,1} & a_{2,1} \\
u_2	& u_1 & u_0 & \ldots & 0   & a_{0,2} & a_{1,2} & a_{2,2} \\ 
\vdots & \vdots & \vdots &  & \vdots & \vdots & \vdots & \vdots \\
0	& 0   & 0  & \ldots & u_{k-1} & a_{0,d-1} & a_{1,d-1} & a_{2,d-1} \\ 
0	& 0   & 0   & \ldots & u_k & a_{0,d} & a_{1,d} & a_{2,d} \\ 
\end{array}
\right)
$$
where $f_j=\sum_{i=0}^d a_{j,i}s^{d-i}t^i$.
It has $d+1$ rows and $d-k+4$ columns.
Following \cite{BGI}, we set the following definition. 

\begin{defn} For all integer $k$ such that $2\leq k \leq d-1$ we denote by $X_k$ the subscheme of $\PP^k=\Proj(\AA)$ defined by the $(d-k+3)$-minors of $\MM_k$.
\end{defn}

From Lemma \ref{lem:invfactor} we deduce that, for all $k$, the schemes $X_k$ are finite (possibly empty) because there are finitely many singular points on the curve $\Cc$, and hence finitely many polynomial factors of degree $k$ to one of the pull-back polynomials of those singular points. The geometric relation between the singular points of $\Cc$ and the schemes $X_k$ can be described by means of the incidence variety
$$ \Gamma=\{(s:t)\times P_k \, : \, P_k(s,t)=0 \textrm{ and } P_k \in X_k \} \subset \PP^1\times \PP^k.$$
Indeed, the canonical projection of $\Gamma$ on $\PP^k$ is equal to $X_k$ obviously. Denote by $Y_k$ the canonical projection of $\Gamma$ on $\PP^1$. Then, $\phi(Y_k)$ is nothing but the set of those singular points $P$ on $\Cc$ such that $m_P(\Cc)\geq k$.

\begin{prop} The degree of the scheme $X_2\subset \PP^2$ is equal to $\delta(\Cc)=(d-1)(d-2)/2$. Moreover, the curve $\Cc$ is cuspidal if and only if the support of $X_2$ is contained in $V(u_1^2-4u_0u_2)\subset \PP^2$.
\end{prop}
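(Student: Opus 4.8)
The plan is to extract both assertions from the incidence-variety picture already set up before the statement, together with the factorisation of the discriminant in Theorem \ref{thmrdc} and the Milnor–Tjurina-type bookkeeping of the singular points. For the degree of $X_2$, I would argue as follows. A point $P_2=(u_0:u_1:u_2)\in\PP^2$ lies in $X_2$ exactly when the quadratic $P_2(s,t)$ divides one of the pull-back polynomials $H_P$ of a singular point $P$ of $\Cc$, by Lemma \ref{lem:invfactor} and the finiteness discussion following the definition of $X_k$. So the support of $X_2$ is the finite set of those degree-$2$ factors (up to scalar) of the various $H_P$. For a singular point $P$ with branches of multiplicities $m_1,\dots,m_{r_P}$, the pull-back polynomial is $H_P=\prod_i(\beta_is-\alpha_it)^{m_i}$, and its degree-$2$ sub-divisors — counted with the scheme structure coming from the minors of $\MM_2$ — contribute exactly $\binom{m_P}{2}-\sum_i\binom{m_i}{2}=\delta_P'$... more precisely I expect the local length of $X_2$ at the corresponding point to be $\delta_P$, the delta-invariant of $(\Cc,P)$. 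Summing over all singular points and using the genus formula \eqref{eq:genus} for the rational curve $\Cc$ (so $p_g=0$), we get $\deg X_2=\sum_P\delta_P=(d-1)(d-2)/2=\delta(\Cc)$.

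The cleanest way to pin down that local length, and the step I expect to be the main obstacle, is to identify $\deg X_2$ directly with a quantity we already control rather than computing the minors of $\MM_2$ by hand. Here I would invoke the discriminant factorisation of Theorem \ref{thmrdc}: the degree-$2$ factors of the various $H_P$ are in bijection (with multiplicity) with the tangent lines through the singular points that are "absorbed" by the discriminant, which contributes $\sum_P(m_P-r_P)$ to $\Disc(uf_0+vf_1+wf_2)$, a polynomial of degree $2(d-1)$. Combining with the degree $d^\vee=2(d-1)-\sum_P(m_P-r_P)$ of $\Cc^\vee$ from Proposition \ref{propcor1}, and with the Milnor formula $\mu_P=2\delta_P-r_P+1$ together with $\sum_P\mu_P=(d-1)(d-2)$ for a rational (hence genus-$0$) curve, one reorganises the counts so that the total scheme-theoretic length of $X_2$ equals $\sum_P\delta_P$. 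The delicate point is making the scheme structure match — that the $(d-1)$-minors of $\MM_2$ cut out $X_2$ with exactly the local multiplicity $\delta_P$ at each singular point, not merely set-theoretically — and for this I would compare with the analysis of $X_k$ in \cite{BGI}, or alternatively note that both sides are locally determined by the branch multiplicities $m_i$ at $P$ and check the equality for a single branch of multiplicity $m$, where $H_P=(\beta s-\alpha t)^m$ has $\binom{m}{2}$ as the count of degree-$2$ sub-schemes, matching $\delta_P=\binom{m}{2}$ for an ordinary $m$-fold branch and extending by the standard additivity of $\delta$ over branches plus the intersection contributions.

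For the cuspidal criterion, the point is purely about the reduced support. A monic (up to scalar) quadratic $P_2(s,t)=u_0s^2+u_1st+u_2t^2$ is a perfect square exactly when its discriminant $u_1^2-4u_0u_2$ vanishes, i.e. when $(u_0:u_1:u_2)\in V(u_1^2-4u_0u_2)$. If $\Cc$ is cuspidal then every singular point $P$ is unibranch, so $H_P=(\beta_Ps-\alpha_Pt)^{m_P}$ with $m_P\geq 2$, and every degree-$2$ divisor of $H_P$ is of the form $(\beta_Ps-\alpha_Pt)^2$, a perfect square; hence every point of $\Supp(X_2)$ lies on the conic $u_1^2-4u_0u_2=0$. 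Conversely, suppose $\Supp(X_2)\subset V(u_1^2-4u_0u_2)$ and assume for contradiction that some singular point $P$ has at least two distinct branches, through $(\alpha_1:\beta_1)\neq(\alpha_2:\beta_2)$ in $\PP^1$; then $(\beta_1s-\alpha_1t)(\beta_2s-\alpha_2t)$ divides $H_P$, so the corresponding point of $\PP^2$ lies in $\Supp(X_2)$ but is a squarefree (non-square) quadratic, hence off the conic — a contradiction. Therefore every singular point of $\Cc$ is unibranch, i.e. $\Cc$ is cuspidal, and (using that $\Cc$ is rational by hypothesis) this is exactly the statement. One small thing to check is that $X_2$ is nonempty whenever $\Cc$ is singular, which is immediate since $d\geq 3$ forces at least one singular point with $m_P\geq 2$ for a rational curve, so the criterion is not vacuous.
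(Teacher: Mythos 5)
Your treatment of the cuspidal criterion is correct and complete: the support of $X_2$ is exactly the set of degree-$2$ divisors of the pull-back polynomials $H_P$ of the singular points, a quadratic is a square iff $u_1^2-4u_0u_2=0$, and unibranch points give only square divisors while a multibranch point always produces a squarefree degree-$2$ divisor. This is the paper's argument, with the converse direction (which the paper leaves implicit) spelled out.

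The degree claim, however, has a genuine gap. Your plan reduces everything to the assertion that the scheme-theoretic length of $X_2$ supported over a singular point $P$ equals $\delta_P$, and you never establish this; worse, the verification you propose cannot work, because $\delta_P$ is \emph{not} a function of the branch multiplicities $m_1,\dots,m_{r_P}$ alone. The singularities $A_2$, $A_4$, $A_6$ all have a single branch of multiplicity $2$, hence the same pull-back polynomial $H_P=\ell^2$ and the same single point $\ell^2\in\PP^2$ of $\Supp(X_2)$, yet $\delta_P=1,2,3$ respectively; so the local length of $X_2$ at that point must be read off from higher-order data in $\MM_2$, not from $H_P$. (Your count $\binom{m_P}{2}-\sum_i\binom{m_i}{2}$ already fails for $A_2$, giving $0$ instead of $1$, and a unibranch point of multiplicity $m$ contributes a single point of $\Supp(X_2)$, not $\binom{m}{2}$ of them.) The appeal to Theorem \ref{thmrdc} does not repair this: $\sum_P(m_P-r_P)$ is unrelated to $\sum_P\delta_P$, and the degree-$2$ divisors of $H_P$ are not matched with tangent lines counted by the discriminant. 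The paper avoids all local analysis: since $X_2$ is finite, the ideal $I_2$ of maximal minors of the $(d+2)\times(d+1)$ matrix $\MM_2$ has the expected codimension $2$, so the Eagon--Northcott (Hilbert--Burch) complex resolves $\AA/I_2$, and the Hilbert polynomial computed from that resolution is the constant $(d-1)(d-2)/2$. That global determinantal computation is the missing idea; with it in hand one could, if desired, run your argument backwards to \emph{deduce} that the length over $P$ is $\delta_P$, but not the other way around.
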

\begin{proof} The scheme $X_2$ is defined by the ideal $I_2$ of $(d+1)$-minors of the matrix $\MM_2$ which is of size $(d+2)\times(d+1)$. As $I_2$ is a codimension 2 ideal and $2=(d+2)-(d+1)+1$, a graded finite free resolution of graded $\AA$-modules of $I_2$ is given by the Eagon-Northcott complex (see for instance \cite[\S A2.6 and Theorem A.2.10]{Eis95}):
	$$ 0 \rightarrow \AA(-n+1)^{n+1} \xrightarrow {\MM_2}   \AA(-n+2)^{n-1}\oplus\AA(-n+1)^3 \rightarrow \AA \rightarrow 
	\AA/I_2 \rightarrow 0.$$ 
	From here, a straightforward computation shows that the Hilbert polynomial of $\AA/I_2$ is equal to the quantity $(d-1)(d-2)/2=\delta(\Cc)$.
	
	For the second claim, we observe that a curve is cuspidal if and only if each singular point has a single pre-image via $\phi$, set-theoretically. Therefore the discriminant of any divisor of degree 2 of the pull-back polynomial of a singular on $\Cc$ must be equal to zero. This implies that the support of $X_2$ is included in the support of the discriminant of the universal polynomial $P_2(s,t)$, which is nothing but $u_1^2-4u_0u_2$. 
\end{proof}


We end this section with an example that illustrates the above proposition.

\begin{exmp} Consider the following two rational quintics with two cusps that are taken from \cite[Example 4.4 (iii)]{Dwh}. The first one $\Cc$ is parameterized by $(s^5:s^3t^2:st^4+t^5)$ and second one $\Dc$ is parameterized by  $(s^5:s^3t^2:t^5)$. They both have the same singular points: a cusp $A_4$ of multiplicity 2 at $(1:0:0)$ (with pull-back polynomial $t^2$) and a cusp $E_8$ of multiplicity 3 at $(0:0:1)$ (with pull-back polynomial $s^3$). However, the curve $\Cc$ is a free curve,  whereas the curve $\Dc$ is a nearly free curve. 
	
	Computer assisted computations show that the schemes $X_3(\Cc)$ and $X_3(\Dc)$ are equal and supported on $V(u_1,u_2,u_3)$, but also show that $X_2(\Cc)$ and $X_2(\Dc)$ are not equal. More precisely, the defining ideal of $X_2(\Cc)$ is given by $(u_2^2,u_1^2-u_0u_2-u_1u_2)\cap (u_1,u_0^2)$ and the defining ideal of $X_2(\Dc)$ is given by $(u_2^2,u_1^2-u_0u_2)\cap (u_1,u_0^2)$; they are both supported on the two points $V(u_1,u_2)$ and $V(u_0,u_1)$, both contained in $V(u_1^2-4u_0u_2)$. Notice also that $X_2(\Cc)$ and $X_2(\Dc)$ are both of degree $\delta(\Cc)=\delta(\Dc)=6$ as expected.

	We notice that similar computations with Example \ref{ex3}, iv) show that the two curves corresponding to cases (A) and (B), which are both nearly free curves, have the same ideals, and hence the same schemes for all $k$. 
\end{exmp}

\section{Global Tjurina numbers and freeness from curve parametrizations}\label{sec:globalTjurina}

In this section, we consider the problem of determining if a rational curve is free or nearly free, directly from its parameterization. For that purpose, we consider maps that are induced by the pull-backs of global differential forms through a curve parameterization. As we will prove below, the Hilbert polynomials of the kernels and cokernels of these maps are related to the global Tjurina number of the considered curve and allow us to decide if it is a free or nearly free curve. 

\medskip

Let $\Omega^j(\C^2)$, resp. $\Omega^j(\C^3)$, be the graded module of global differential $ j$-forms with polynomial coefficients on $\C^2$, resp. $\C^3$. Here we set $\deg s=\deg t= \deg {\rm d}s=\deg {\rm d}t=1$, and similarly $\deg x=\deg y=\deg z=\deg {\rm d}x=\deg {\rm d}y=\deg {\rm d}z=1$. 
If $\phi=(f_0,f_1,f_2):\PP^1 \to \PP^2$ is a parametrization of the plane curve $\Cc$ of degree $d\geq 2$, then there is, for any positive integer $q$ and for $j=0,1,2$, a linear map
\begin{equation}\label{eq:pbdifforms}
	\phi^j_q: \Omega^j(\C^3)_q \to \Omega^j(\C^2)_{qd},
\end{equation}
induced by the pull-backs of $j$-forms under $\phi$.

\medskip

We notice that the maps $\phi_q^j$ do not depend on a defining equation $F=0$ of the curve $\Cc$, but only on the defining polynomials $f_0,f_1,f_2$ of the parameterization $\phi$ of $\Cc$. To be more precise, we make explicit these maps. 

The map $\phi_q^0$ is simply the map from $S_q$ to $R_{qd}$ which sends a homogeneous polynomial $A(x,y,z)\in S_q$ to the polynomial $A(f_0,f_1,f_2) \in R_{dq}$. Choosing basis for the finite $\C$-vector spaces $S_q$ and $R_{qd}$, it is clear that the entries of the corresponding matrix of $\phi^0_q$ only depend on the coefficients of the polynomials $f_0,f_1,f_2$.

An element in $\Omega^1(\C^3)$ can be written as 
$$\omega = A_x(x,y,z){\rm d}x+A_y(x,y,z){\rm d}y+A_z(x,y,z){\rm d}z$$
where $A_x,A_y,A_z$ are homogeneous polynomials in $S_{q-1}$. Thus, we have 
\begin{align*}
	\phi_q^1(\omega) &= A_x(f_0,f_1,f_2){\rm d}f_0+A_y(f_0,f_1,f_2){\rm d}f_1+A_z(f_0,f_1,f_2){\rm d}f_2\\
					 &= \left(A_x(f_0,f_1,f_2)\partial_sf_0+A_y(f_0,f_1,f_2)\partial_sf_1+A_z(f_0,f_1,f_2)\partial_sf_2\right){\rm d}s\\
					 &+ \left(A_x(f_0,f_1,f_2)\partial_tf_0+A_y(f_0,f_1,f_2)\partial_tf_1+A_z(f_0,f_1,f_2)\partial_tf_2\right){\rm d}t
\end{align*}
which is an element in $\Omega_{qd}^1(\C^2)$. Therefore, the map $\phi^1_{q}$ is identified to a map from $(S_{q-1})^3$ to $(R_{qd-1})^2$ whose entries only depend on the coefficients of the polynomials $f_0,f_1,f_2$.

Similarly, an element in $\Omega^2(\C^3)$ can be written as 
$$\omega = A_{x}(x,y,z){\rm d}y\wedge {\rm d}z+A_y(x,y,z){\rm d}x\wedge {\rm d}z+A_z(x,y,z){\rm d}x\wedge {\rm d}y$$
where $A_x,A_y,A_z$ are homogeneous polynomials in $S_{q-2}$. Using the notation $m_{ij}$ for the 2-minors of the Jacobian matrix of $\phi$, as introduced just before Theorem \ref{thm52}, we have 
$$\phi_q^2(\omega)= \left( 
A_x(f_0,f_1,f_2)m_{12}+A_y(f_0,f_1,f_2)m_{02}+A_z(f_0,f_1,f_2)m_{01}
\right){\rm d}s\wedge {\rm d}t.$$
Therefore, the map $\phi_q^2(\omega)$ is identified to a map from $(S_{q-2})^3$ to $(R_{qd-2})$ and choosing basis, the entries of its matrix only depend on the coefficients of the polynomials $f_0,f_1,f_2$.

 \begin{thm} \label{thmforms1}
 Let $\phi:\PP^1 \to \PP^2$ be a parametrization of the rational plane curve $\Cc$ of degree $d\geq 3$. Then the following properties hold.
\begin{itemize}
		\item[i)] 
$ \dim \cok \phi^0_q=\delta(\Cc)= (d-1)(d-2)/2,$ for any $q \geq d-2$.

\item[ii)]  $\dim \cok \phi^1_q= \tau(\Cc)$, the global Tjurina number of $\Cc$,  for any $q\geq d$, and for any $q \geq d-2$ when $\Cc$ is in addition free. 

\item[iii)] $\dim \cok \phi^2_q= \tau(\Cc)-\delta(\Cc) $ for any $q\geq d$.  If $\Cc$ is in addition free (resp. nearly free) with exponents $(d_1,d_2)$, then the equality holds for any $q \geq d-d_1-1$ (resp. $q \geq d-d_1+1$).

\end{itemize}
\end{thm}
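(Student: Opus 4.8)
The plan is to identify the cokernels of the three maps $\phi^j_q$ with well-understood sheaf cohomology groups on $\PP^1$ and on $\Cc$, and then to compare with the known description of $\tau(\Cc)$ and $\delta(\Cc)$. For part i), the map $\phi^0_q$ is just $A\mapsto A(f_0,f_1,f_2)$, i.e.\ the degree-$qd$ part of the $\C$-algebra map $S\to R$ induced by $\phi$. Its image is the degree-$qd$ piece of the homogeneous coordinate ring $\C[f_0,f_1,f_2]$ of $\Cc$ (viewed via the normalization $n:\PP^1\to\Cc$), so $\cok\phi^0_q$ measures the failure of projective normality, which is controlled by $H^1(\PP^1,\Oc_{\PP^1}(qd-?))$ versus $H^0$ of the structure sheaf of $\Cc$. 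More precisely I would write the exact sequence relating $S/I_\Cc$, the ring $R'=\oplus_m H^0(\PP^1,\Oc_{\PP^1}(md))$ and its saturation, and observe that for $q\geq d-2$ the cokernel stabilizes to $\sum_P\delta_P=\delta(\Cc)$ since the arithmetic genus of $\Cc$ minus the geometric genus ($=0$) equals $\delta(\Cc)$, and $qd\geq \deg\Cc\cdot(d-2)$ is past the regularity of $\Oc_\Cc$. The precise threshold $q\geq d-2$ comes from Castelnuovo--Mumford regularity of $\Cc\subset\PP^2$, which is at most $d-1$, combined with $\deg\Cc\mid qd$.

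For part ii), the key is the computation appearing in the proof of Theorem~\ref{thmmainA}: the pull-back of a $1$-form $\omega(h)=h\Omega/F$ with $h\in S_m$, $m=d-3$, has residue landing in $H^0(\Cc,\Omega^1_\Cc)$, and more generally for arbitrary degree the relevant object is the module of Kähler differentials. I would argue that $\cok\phi^1_q$ fits into an exact sequence whose stable value, for $q$ large, is $\dim H^0$ of the cokernel sheaf of $n^*\Omega^1_\Cc\to\Omega^1_{\PP^1}$ twisted appropriately; this cokernel sheaf is supported at the singular points and has length exactly $\sum_P\tau_P=\tau(\Cc)$ — this is the global analogue of \cite[Theorem (1.1)(2)]{DGr}, which identifies, for a curve singularity, the Tjurina number with the length of the torsion of the module of differentials. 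Concretely: $\Omega^1(\C^2)_{qd}=R_{qd-1}\,\d s\oplus R_{qd-1}\,\d t$, and the image of $\phi^1_q$ is spanned by $\d(A(f_0,f_1,f_2))$ for $A\in S_q$; dividing out by the Euler form identifies the cokernel with a twist of $H^0$ of differentials on $\Cc$ pulled back to $\PP^1$. Counting dimensions with Riemann--Roch on $\PP^1$ and the genus formula \eqref{eq:genus} together with the Milnor--Tjurina comparison gives $\tau(\Cc)$. The threshold $q\geq d$ (improved to $q\geq d-2$ in the free case) will follow by invoking Theorem~\ref{thmmainA}: $st(F)\leq 2d-3$ forces stabilization by degree $qd$ once $q\geq d$ roughly (since $2(qd)\geq 3(d-2)$-type inequalities hold), and when $\Cc$ is free the sharper bound $st(F)=2d-4-d_1\leq 2d-6$ recalled after Example~\ref{ex1} lowers the threshold.

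For part iii), I would use the contraction/Koszul relation between $2$-forms and $1$-forms: a $2$-form $\omega=A_x\,\d y\wedge\d z+\cdots$ pulls back, via the $2$-minors $m_{ij}$ of $J(\phi)$, to $(A_xm_{12}+A_ym_{02}+A_zm_{01})\,\d s\wedge\d t$, and by Remark~\ref{cor53} the common factor $A=\gcd(m_{ij})$ has degree $\sum_P(m_P-r_P)$. So $\im\phi^2_q=A\cdot(\text{ideal generated by }m_{ij}'\text{ paired with }S_{q-2}\text{ evaluated})$, and the cokernel is the sum of a "$\delta(\Cc)$-part" coming from the factor $A$ and a "$\tau(\Cc)$-part" — after subtracting, one gets $\tau(\Cc)-\delta(\Cc)=\sum_P(\tau_P-\delta_P)$, which by Milnor's formula $\mu_P=2\delta_P-r_P+1$ is a nonnegative integer. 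Alternatively, and perhaps more cleanly, I would relate $\cok\phi^2_q$ to $\cok\phi^1_{q}$ via the de Rham-type exact sequence $\Omega^1(\C^3)\xrightarrow{d}\Omega^2(\C^3)$ and its pullback, reducing iii) to ii) plus i); the subtraction of $\delta(\Cc)$ then comes from the image of the exterior derivative, whose rank deficiency is exactly $\delta(\Cc)$ by part i). The thresholds in the free and nearly free cases come from Theorem~\ref{thm1} and Theorem~\ref{thm4}: knowing $N(F)$ has Hilbert function bounded by $1$ (nearly free) or is zero (free), and knowing $mdr(F)=d_1$, pins down exactly when the relevant graded pieces have stabilized, via the symmetry formula $\dim N(F)_k=\dim M(F)_k+\dim M(F)_{D-k}-\dim M(F_s)_k-\tau(\Cc)$ used in the proof of Theorem~\ref{thmmainA}.

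The main obstacle I anticipate is part ii): making rigorous the identification of $\cok\phi^1_q$ with a cohomology group of the torsion sheaf of differentials, and in particular pinning down the exact stabilization thresholds. The residue computation of Theorem~\ref{thmmainA} handles only the bottom degree $m=d-3$ and only asserts injectivity; here one needs a genuine dimension count in \emph{all} degrees $q\geq d$, so one must either (a) set up the full exact sequence $0\to\text{(regular part)}\to\phi^1\text{-source}\to\text{torsion}\to 0$ and control the higher cohomology, or (b) reduce directly to the Milnor algebra via the isomorphism $\Omega^1(\C^3)_q/\langle dF\wedge\Omega^0,\ \text{Euler}\rangle\cong M(F)$-twisted-pieces, which is the route suggested by the paper's own phrasing "global analog of \cite[Theorem (1.1)(2)]{DGr}". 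I would pursue route (b): express $\cok\phi^1_q$ through the middle cohomology of the Koszul complex on $F_x,F_y,F_z$, whose length past the stability threshold is $\tau(\Cc)$, and use Theorem~\ref{thmmainA} together with the regularity estimates \eqref{CasMum} to certify that $q\geq d$ (resp.\ $q\geq d-2$ in the free case) is enough.
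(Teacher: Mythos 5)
Part i) of your proposal is fine: the paper does exactly the elementary version of what you describe, namely the injectivity of $(S/(F))_q\to R_{qd}$ plus a direct count $\dim R_{qd}-\dim(S/(F))_q=(d-1)(d-2)/2$ for $q\geq d-2$; no sheaf cohomology is needed. The real content of the theorem is in ii) and iii), and there your proposal has two genuine gaps.

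First, for iii) you correctly set up $\im\phi^2_q=A\cdot S(\Cc)_{q-2}\cdot\langle f_0^\vee,f_1^\vee,f_2^\vee\rangle$ with $\deg A=\sum_P(m_P-r_P)$, but you never say where $\tau(\Cc)$ actually enters; ``the cokernel is the sum of a $\delta$-part and a $\tau$-part'' is the conclusion, not an argument. The missing bridge is the factorization $F_{x_i}\circ\phi=h\,g_i'$ together with the proportionality of $(g_0',g_1',g_2')$ and $(m_{12},-m_{02},m_{01})$ (Remark \ref{rk55}): setting $k=q+d-3$, the image of $(J/(F))_k$ under $\phi^0$ equals $h\cdot V_q$ where $V_q=S(\Cc)_{q-2}\cdot\langle f_i^\vee\rangle$, so the codimension $c_q$ of $V_q$ satisfies $c_q+\deg h=\tau(\Cc)+\delta(\Cc)$ because the cokernel of $(J/(F))_k\subset(S/(F))_k$ has dimension $\dim M(F)_k=\tau(\Cc)$ in the stated range. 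Combined with $\deg h-\deg A=2\delta(\Cc)$ this gives $\dim\cok\phi^2_q=c_q+\deg A=\tau(\Cc)-\delta(\Cc)$. This identification is also exactly what produces the thresholds $q\geq d$, resp.\ $q\geq d-d_1\pm1$, via Theorem \ref{thmmainA} and the known stability thresholds for free and nearly free curves; your ``$2(qd)\geq 3(d-2)$-type inequalities'' do not yield the sharp bounds claimed.

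Second, for ii) you correctly flag the difficulty but neither of your routes is carried out, and route (a) points in a direction that does not match the objects in play: $\cok\phi^1_q$ lives inside $\Omega^1(\C^2)_{qd}$, i.e.\ polynomial $1$-forms on $\C^2$, not sections of $\Omega^1_{\PP^1}$ or of the torsion of $\Omega^1_\Cc$, so the ``length of torsion equals $\tau$'' statement, even granted, is not obviously the same invariant. The paper instead deduces ii) from i) and iii): it forms the quotient de Rham complexes $\tilde\Omega^\bullet_q$, uses the Euler contraction $\Delta$ to split off the summand $\tilde I$ killed by $\phi^2_q$ (via $f_0m_{12}-f_1m_{02}+f_2m_{01}=0$), applies the Snake Lemma to get the six-term sequence \eqref{eq:eskey}, and shows the connecting map $\ker\hat\phi^2_q\to\cok\tilde\phi^0_q$ vanishes (again by an Euler-operator computation), whence $\dim\cok\phi^1_q=\dim\cok\tilde\phi^0_q+\dim\cok\tilde\phi^2_q=\delta(\Cc)+(\tau(\Cc)-\delta(\Cc))$. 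Note the logical order: iii) must be proved independently first, so your alternative suggestion of deducing iii) from ii) would leave you with neither. To repair the proposal you need to supply the $h$ versus $A$ comparison for iii) and the exactness-plus-vanishing-of-the-connecting-map argument for ii).
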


\begin{proof}
The morphism $\phi^0_q$ is in fact a morphism $S_q \to R_{qd}$ whose kernel is $(F)_q$,
the degree $q$ homogeneous component of the ideal $(F)$ in $S$.
The induced morphism  $ \phi^0_q:(S/(F))_q \to R_{qd}$ is  injective for any $q$. Hence it is enough to notice that $ \dim (S/(F))_q$ is given by 
$${q+2 \choose 2} - {q-d+2 \choose 2}=\frac{(q+2)(q+1)-(q-d+2)(q-d+1)}{2}$$
 for any $q \geq d-2$. On the other hand, $\dim R_{qd}=qd+1$ and $\delta(\Cc)=(d-1)(d-2)/2$ since $\Cc$ is a rational curve. This completes the proof of the claim i).

Next we prove the third claim iii). Note that  $\cok  \phi^2_q$ has a simple algebraic description. As noted above, the morphism $\phi^0: S(\Cc)=S/(F) \to R$ identifies the ring $S/(F)$ with a subring of the ring $R$, and hence $R$ can be regarded as an $S(\Cc)$-module. Then one has the following obvious identifications.

\begin{enumerate} 

\item $\Omega^2(\C^2)_{qd}=R_{qd-2}$;

\item The image of $\phi^2_q$ can be written as
$$I^2(\phi)=\im \{\phi^2_q: \Omega^2(\C^3)_q \to \Omega^2(\C^2)_{qd}\}=S(\Cc)_{q-2} \cdot <m_{01}, m_{02},m_{12}>$$ where $m_{ij}$ are the 2-minors of the Jacobian matrix $J(\phi)$,
as introduced before Theorem \ref{thm52}, and $<m_{01}, m_{02},m_{12}>$ denotes the vector space spanned by these minors in $R_{2d-2}$. This image can be rewritten as
$$I^2(\phi)=A \cdot S(\Cc)_{q-2} \cdot <f_0^{\vee}, f_1^{\vee},f_2^{\vee}>,$$
where $A =\gcd(m_{01}, m_{02},m_{12})$ is a polynomial in $R$ of degree $\sum_P(m_P-r_P)$, as in Corollary \ref{cor53}. 
The multiplication by $A$ gives an injective linear map $R_{qd-2-\deg A} \to R_{qd-2}$,
with a cokernel of dimension 
$$(qd-1)-(qd-1-\deg A)=\deg A.$$ Let $c_q$ denote the codimension of $V_q=S(\Cc)_{q-2} \cdot <f_0^{\vee}, f_1^{\vee},f_2^{\vee}>$ in $R_{qd-2-\deg A}$. Then clearly $\dim \cok  \phi^2_q =c_q+\deg A$.

\item Consider the injective morphism $\phi^0_k: S(\Cc)_k=(S/(F))_k \to R_{kd}$, and note that the
subspace $(J/(F))_k \subset (S/(F))_k$ is mapped under $\phi^0$ to 
$$W_k=S(\Cc)_{k-d+1} \cdot <g_0, g_1,g_2>=h\cdot S(\Cc)_{k-d+1}\cdot <g'_0, g'_1,g'_2> \subset R_{kd},$$
using the notation of \eqref{eq:psi}. If we take now $k=q+d-3$, 
we get $W_{q+d-3}=h\cdot V_q$. Note that the dimension of the cokernel of the inclusion
$$(J/(F))_{q+d-3} \subset (S/(F))_{q+d-3}$$
is nothing else but $\dim M(F)_{q+d-3}$, and so it coincides with $\tau(\Cc)$ for $q \geq d$,
and for $q \geq d-3$ for a free curve $\Cc$ in view of Theorem \ref{thmmainA}.
It follows that the dimension of the cokernel of the composition
$$(J/(F))_{q+d-3} \to (S/(F))_{q+d-3} \to R_{d(q+d-3)}$$
is equal to $\tau(\Cc)+ \delta(\Cc)$ under these conditions. The image of this composition is also the image of the composition
$$V_q \to R_{qd-2-\deg A} \to R_{d(q+d-3)},$$
where the first morphism is the inclusion, and the second is the multiplication by $h$. Recall
that $\deg h -\deg A=2\delta(\Cc)=(d-1)(d-2),$ and hence the degrees are correct.
Using this second composition, we see that the codimension of the image is $c_q+ \deg h$. This yields the equality
$$\tau(\Cc)+ \delta(\Cc)=c_q+\deg h,$$
or, equivalently,
$$c_q=\tau(\Cc)+ \delta(\Cc)-\deg h,$$
for any $q\geq d$ when  $\Cc$ is rational, and for $q \geq d-2$ when $\Cc$ rational and free.

\end{enumerate}
The claim iii) follows now from (2) and (3):
$$\dim \cok  \phi^2_q =c_q+\deg A= \tau(\Cc)+ \delta(\Cc)-\deg h+ \deg A=\tau(\Cc)-\delta(\Cc).$$
To prove the claim ii), we relate now the 1-forms to the 2-forms as follows.
To simplify the notation, we write $\Omega^j=\Omega^j(\C^3)$ in this proof. The de Rham theorem yields, for any $q>0$, an exact sequence
$$0 \to \Omega^0_q \to \Omega^1_q \to \Omega^2_q \to \Omega^3_q \to 0,$$
where the morphisms are given by the exterior differential $\d$ of forms. Note that the following sub-sequence
$$0 \to F\Omega^0_{q-d} \to (F\Omega^1_{q-d} +{\rm d}F\wedge \Omega^0_{q-d}) \to    (F\Omega^2_{q-d} +{\rm d}F\wedge \Omega^1_{q-d}) \to {\rm d}F\wedge \Omega^2_{q-d} \to 0, $$
where clearly $F\Omega^3_{q-d} +{\rm d}F\wedge \Omega^2_{q-d}={\rm d}F\wedge \Omega^2_{q-d}$, is also exact. As an example, let's prove the exactness at the level of 2-forms.
To do this, consider the contraction by the Euler vector field, namely the $S$-linear operator
$\Delta: \Omega^j \to \Omega^{j-1}$, whose main properties are recalled in \cite[Chapter 6]{book2}.
Recall in particular the following equality involving the operators $\d$ and $\Delta$: for any homogeneous differential form $\omega$ of degree $|\omega|$, one has
$$\Delta \d \omega +\d \Delta \omega =|\omega| \omega.$$
If $\omega \in F\Omega^2_{q-d} +{\rm d}F\wedge \Omega^1_{q-d}$, then we can write 
$\omega=F\alpha+{\rm d}F \wedge \beta$. If  $\omega$ satisfies $\d\omega=0$, then by the above formula we get
$$|\omega| \omega=\d\Delta(F\alpha+{\rm d}F \wedge \beta)=\d(F \Delta(\alpha)+ d \cdot F\beta-{\rm d}F 
\wedge \Delta(\beta)),$$
which implies that $\omega \in \d(F\Omega^1_{q-d} +{\rm d}F\wedge \Omega^0_{q-d})$.
It follows that, by taking the quotient of the above two exact sequences, we get a new exact sequence
$$0 \to \tilde {\Omega}^0_q \to \tilde \Omega^1_q \to \tilde \Omega^2_q \to \tilde \Omega^3_q \to 0,$$
where the morphisms, induced by $\d$, are denoted by $\tilde \d$. Note also that $\Delta$ induces a morphism $\tilde \Delta:  \tilde \Omega^3_q \to  \tilde \Omega^2_q$ such that 
$$\tilde \d \tilde \Delta  \omega=|\omega| \omega.$$
If follows that $\tilde \Delta$ is injective and its image $\tilde I$ satisfies
$$\tilde I \oplus \ker \tilde \d=\tilde \Omega^2_q.$$
Under the morphism $\tilde \phi^2_q : \tilde \Omega^2_q   \to \Omega^2(\C^2)_{qd}$, induced by $ \phi^2_q $, the subspace $\tilde I $ is sent to 0. To see this, it is enough to explain why
$$\phi^2_q(\Delta({\rm d}x\wedge {\rm d}y \wedge {\rm d}z))=0.$$
Note that 
$$\Delta({\rm d}x\wedge {\rm d}y \wedge {\rm d}z)=x {\rm d}y \wedge {\rm d}z -y {\rm d}x\wedge {\rm d}z+ z{\rm d}x \wedge {\rm d}y,$$
and hence
$$\phi^2_q(\Delta({\rm d}x\wedge {\rm d}y \wedge {\rm d}z))=(f_0m_{12}-f_1m_{02}+f_2m_{01}){\rm d}s \wedge {\rm d}t.$$
The claim follows now using  the proportionality of $(g_0',g_1',g_2')$ and
$(m_{12},-m_{02},m_{01})$.
Then, we apply  the Snake Lemma to the morphism $(\tilde \phi^0_q,\tilde \phi^1_q,\tilde \phi^2_q)$, induced by the morphism $( \phi^0_q, \phi^1_q, \phi^2_q)$, from the short exact sequence
$$ 0 \to \tilde {\Omega}^0_q \to \tilde \Omega^1_q \to \tilde \d( \tilde \Omega^1_q)  \to 0 $$
to the short exact sequence
$$ 0 \to \Omega^0(\C^2)_{qd} \to  \Omega^1(\C^2)_{qd}  \to  \Omega^2(\C^2)_{qd}  \to 0. $$
This yiel{\rm d}s a long exact sequence
\begin{equation}\label{eq:eskey}
0 \to \ker \tilde \phi^0_q  \to \ker \tilde \phi^1_q  \to \ker \hat \phi^2_q  \to \cok \tilde \phi^0_q  \to \cok \tilde \phi^1_q  \to \cok \hat \phi^2_q  \to 0,
\end{equation}
where $\hat \phi^2_q=\tilde  \phi^2_q |   \d( \tilde \Omega^1_q)$. In this sequence, we know the following facts.
\begin{enumerate} 
\item $\ker \tilde \phi^0_q=0$ and, for $q \geq d-2$,  $\dim \cok \tilde \phi^0_q =\delta(\Cc)$ by the claim i) proved above.
\item $\cok \hat \phi^2_q=\cok \tilde \phi^2_q$, since $\tilde I \subset \ker \tilde \phi^2_q$.
\end{enumerate}

\medskip

Finally, to complete the proof of Theorem \ref{thmforms1}, we only need to show that the connecting morphism $\delta: \ker \tilde \phi^2_q  \to \cok \tilde \phi^0_q $ is trivial.
For that purpose, recall the definition of the morphism $\delta$. Start with $\omega \in \d( \tilde \Omega^1_q)$ such that $\tilde \phi^2_q (\omega)=0.$ It follows that there is a 1-form $\alpha \in \Omega^1_q$ such that $\omega=\d(\alpha)$. Then we have
$$\d(\phi^1_q (\alpha))=\phi^2_q (\d \alpha)=\phi^2_q (\omega)=0.$$
Hence there is a unique $G \in R_{dq}$ with ${\rm d}G=\phi^1_q (\alpha)$.
If $\alpha=A_x{\rm d}x+A_y{\rm d}y+A_z{\rm d}z$, then we get 
$$\eta=\phi^*_q(\alpha)=A_x(\phi){\rm d}f_0+A_y(\phi){\rm d}f_1+A_z(\phi){\rm d}f_2.$$
It follows that $\Delta(\eta)=d\cdot (A_0(\phi)f_0+A_1(\phi)f_1+A_2(\phi)f_2)$,
since $\Delta({\rm d}f_j)=d \cdot f_j$ for $j=0,1,2$. Then we get
$$(dq) \cdot G= \Delta({\rm d}G)=\Delta(\phi^1_q (\alpha))=d \cdot \phi^0_q (A_xx+A_yy+A_zz),$$
which proves our claim.
\end{proof}

Results similar to those given in Theorem \ref{thmforms1} also hold for the kernels of the maps \eqref{eq:pbdifforms}.

\begin{thm}
\label{thm2forms}

 Let $\phi:\PP^1 \to \PP^2$ be a parametrization of the rational plane curve $\Cc$ of degree $d\geq 3$ and consider the morphisms $\tilde \phi^j_q $ defined above. Then the following properties  hold.
\begin{itemize}
		\item[i)] 
$  \ker \tilde \phi^0_q=0$ for any $q$.
		 
\item[ii)]  $\dim \ker \tilde \phi^1_q= \tau(\Cc)$,  for any $q\geq 2d-2$. 

\item[iii)] $\dim \ker \tilde \phi^2_q= 2\tau(\Cc)$ for any $q\geq 2d$, and for any $q \geq 2d-2$ when $\Cc$ is in addition free. 

\end{itemize}
\end{thm}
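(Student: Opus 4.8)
The plan is to exploit the long exact sequence \eqref{eq:eskey} and the analogous de Rham-type construction already set up in the proof of Theorem \ref{thmforms1}, now reading off the \emph{kernels} rather than the cokernels. Claim (i) is immediate: $\phi^0_q:S_q \to R_{qd}$ has kernel $(F)_q$, so $\tilde\phi^0_q$, which is $\phi^0_q$ restricted to $(S/(F))_q$, is injective for every $q$, exactly as recalled at the start of the proof of Theorem \ref{thmforms1}. For claims (ii) and (iii) the idea is to combine \eqref{eq:eskey} with the dimension count for the cokernels obtained in Theorem \ref{thmforms1} and with the dimension of the ambient graded pieces, all of which are known explicitly as polynomials in $q$ and $d$; an Euler-characteristic argument along the six-term exact sequence then pins down the kernel dimensions once $q$ is large enough that all the cokernel formulas of Theorem \ref{thmforms1} are in force.

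First I would fix, for $q$ large, all the numbers entering \eqref{eq:eskey}: $\dim\tilde\Omega^0_q=\dim(S/(F))_q$, $\dim\tilde\Omega^1_q$, and $\dim\tilde\d(\tilde\Omega^1_q)$ (equivalently $\dim\tilde\Omega^1_q-\dim\tilde\Omega^0_q$ since $\tilde\d|\tilde\Omega^1$ is the de Rham differential and its kernel on the image side is accounted for), and on the target side $\dim\Omega^0(\C^2)_{qd}=qd+1$, $\dim\Omega^1(\C^2)_{qd}=2(qd)$ (up to the usual $\pm1$ shift, $\dim R_{qd-1}=qd$ so $(R_{qd-1})^2$ has dimension $2qd$), and $\dim\Omega^2(\C^2)_{qd}=\dim R_{qd-2}=qd-1$. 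Taking the alternating sum of dimensions in \eqref{eq:eskey} gives
\[
\dim\ker\tilde\phi^0_q-\dim\ker\tilde\phi^1_q+\dim\ker\hat\phi^2_q-\dim\cok\tilde\phi^0_q+\dim\cok\tilde\phi^1_q-\dim\cok\hat\phi^2_q=0.
\]
Plugging in $\ker\tilde\phi^0_q=0$, $\dim\cok\tilde\phi^0_q=\delta(\Cc)$ (claim (i) of Theorem \ref{thmforms1}, valid for $q\geq d-2$), $\dim\cok\tilde\phi^1_q=\tau(\Cc)$ (claim (ii), valid for $q\geq d$), and $\dim\cok\hat\phi^2_q=\dim\cok\tilde\phi^2_q=\tau(\Cc)-\delta(\Cc)$ (claim (iii), valid for $q\geq d$, using $\tilde I\subset\ker\tilde\phi^2_q$ so that $\cok\hat\phi^2_q=\cok\tilde\phi^2_q$), the relation collapses to $\dim\ker\hat\phi^2_q-\dim\ker\tilde\phi^1_q=0$. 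So $\ker\tilde\phi^1_q$ and $\ker\hat\phi^2_q$ have the same dimension for $q\geq d$; it remains to compute one of them. I would compute $\dim\ker\tilde\phi^1_q$ directly: $\tilde\Omega^1_q$ is $(S/(F))$-versions of 1-forms, and $\phi^1_q$ sends $\omega=A_x\d x+A_y\d y+A_z\d z$ to the pullback; its kernel is governed by the Jacobian syzygies of $F$ composed with $\phi$ together with the Koszul-type relations, so that $\dim\ker\tilde\phi^1_q$ stabilizes to $\tau(\Cc)$ once $q$ passes the stability threshold $st(F)$, which by Theorem \ref{thmmainA} is $\leq 2d-3$; hence $q\geq 2d-2$ suffices. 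This gives (ii). For (iii), $\ker\tilde\phi^2_q$ sits in the extension $0\to\tilde I\to\ker\tilde\phi^2_q\to\ker\hat\phi^2_q\to 0$ coming from $\tilde I\oplus\ker\tilde\d=\tilde\Omega^2_q$ and $\tilde I\subset\ker\tilde\phi^2_q$, so $\dim\ker\tilde\phi^2_q=\dim\tilde I+\dim\ker\hat\phi^2_q=\dim\tilde I+\dim\ker\tilde\phi^1_q$; since $\tilde\Delta:\tilde\Omega^3_q\xrightarrow{\sim}\tilde I$ and $\dim\tilde\Omega^3_q=\dim(S/(F))_{q-3}$-type quantity needs to be matched against $\tau(\Cc)$ in the stable range, I expect $\dim\tilde I$ to equal $\tau(\Cc)$ for $q$ large (comparing with the smooth model $F_s$ as in the proof of Theorem \ref{thmmainA}), yielding $\dim\ker\tilde\phi^2_q=2\tau(\Cc)$, with the threshold $q\geq 2d$ in general and $q\geq 2d-2$ under freeness by the improved range in Theorem \ref{thmforms1}(ii),(iii).

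The main obstacle, I expect, is the bookkeeping around $\tilde I$ and $\tilde\Omega^3_q$: one must verify that the contraction operator $\tilde\Delta$ is an isomorphism onto $\tilde I$ on the quotient modules (this uses $\tilde\d\tilde\Delta\omega=|\omega|\omega$ exactly as in the proof of Theorem \ref{thmforms1}), and then identify $\dim\tilde\Omega^3_q$ with $\tau(\Cc)$ in the stable range by the Milnor-algebra dimension comparison $\dim M(F)_k=\dim M(F)_k+\dim M(F)_{D-k}-\dim M(F_s)_k-\tau(\Cc)$ used for Theorem \ref{thmmainA}. Getting the precise threshold ($2d-2$ vs.\ $2d$, and the free case) requires invoking the sharper ranges in Theorem \ref{thmforms1} and in Theorem \ref{thmmainA}/\cite{DStRIMS}, and making sure the connecting map $\delta$ in \eqref{eq:eskey} is still zero in this range — but that triviality was already proved in the proof of Theorem \ref{thmforms1} and applies verbatim. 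Everything else is a routine alternating-sum computation with explicitly known Hilbert functions.
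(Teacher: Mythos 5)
Your overall architecture matches the paper's: claim (i) is immediate from $\ker\phi^0_q=(F)_q$; the six-term sequence \eqref{eq:eskey} (whether via the vanishing of the connecting map or via your alternating-sum computation) gives $\dim\ker\hat\phi^2_q=\dim\ker\tilde\phi^1_q$; and $\ker\tilde\phi^2_q$ is $\ker\hat\phi^2_q$ enlarged by $\tilde I\cong\tilde\Omega^3_q$, whose dimension is $\dim M(F)_{q-3}=\tau(\Cc)$ for $q\geq st(F)+3$, with Theorem \ref{thmmainA} (and its sharpening for free curves) supplying the thresholds $2d$ and $2d-2$ in claim (iii). All of that is correct and is exactly how the paper argues.

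There is, however, a genuine gap at the one place where a new computation is required, namely claim (ii). You correctly note that after the Euler-characteristic step ``it remains to compute one of them,'' but the ensuing sentence --- that $\ker\tilde\phi^1_q$ is ``governed by the Jacobian syzygies of $F$ composed with $\phi$'' and therefore ``stabilizes to $\tau(\Cc)$ once $q$ passes $st(F)$'' --- asserts the conclusion rather than proving it, and it also misattributes the source of the bound $q\geq 2d-2$. The paper's actual argument is a rank--nullity count: one shows that
$$\dim\tilde\Omega^1_q=\dim\frac{\Omega^1(\C^3)_q}{S_{q-d}{\rm d}F+F\Omega^1(\C^3)_{q-d}}=2qd=\dim\Omega^1(\C^2)_{qd}$$
for $q\geq 2d-2$, whence $\dim\ker\tilde\phi^1_q=\dim\cok\tilde\phi^1_q=\tau(\Cc)$ by Theorem \ref{thmforms1} ii). Establishing that dimension requires proving the exactness of
$$0\to S_{q-2d}\to S_{q-d}\times\Omega^1(\C^3)_{q-d}\to S_{q-d}{\rm d}F+F\Omega^1(\C^3)_{q-d}\to 0,$$
i.e., that an identity $A{\rm d}F+F\omega=0$ forces $A\in(F)$; this uses that $F$ is reduced, so that some partial derivative of $F$ is nonzero at a smooth point of $\Cc$. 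The threshold $2d-2$ then comes from requiring $\dim S_{q-2d}={q-2d+2\choose 2}$ in the resulting Hilbert-function computation, not from $st(F)\leq 2d-3$. Without this step, (ii) is unproved, and (iii) --- which you correctly reduce to (ii) together with $\dim\tilde I=\tau(\Cc)$ --- falls with it.
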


\begin{proof}
The claim for $ \ker \tilde \phi^0_q$ is obvious.
In view of Theorem \ref{thmforms1},  to prove the claim for $ \ker \tilde \phi^1_q$, it is enough to shows that
$$\dim \tilde \Omega^1_q=\dim \frac{ \Omega^1(\C^3)_q}{S_{q-d}{\rm d}F + F\Omega^1(\C^3)_{q-d}}=\dim \Omega^1(\C^2)_{qd}=2qd.$$
For $q> d$, an equality $A{\rm d}F+F\omega=0$, where $A\in S_{q-d}$ and $\omega \in \Omega^1(\C^3)_{q-d}$ is nonzero, implies that the three products $AF_x$, $AF_y$ and $AF_z$ are divisible by $F$. At a smooth point $P \in \Cc$, at least one of the partial derivatives $F_x, F_y$ or $F_z$ is nonzero, and hence $A(P)=0$. This implies that $A$ can be written as
$A=FB$, for a polynomial $B \in S_{q-2d}$, and then $\omega=-B {\rm d}F$. In other words, we have an exact sequence
$$0 \to S_{q-2d} \to S_{q-d} \times \Omega^1(\C^3)_{q-d} \to (S_{q-d}{\rm d}F + F\Omega^1(\C^3)_{q-d}) \to 0.$$
For $q \geq 2d-2$, this exact sequence shows that
$\dim (S_{q-d}{\rm d}F + F\Omega^1(\C^3)_{q-d})$ is given by
$${q-d+2 \choose 2}+3{q-d+1 \choose 2}-{q-2d+2 \choose 2}.$$
Since $\dim\Omega^1(\C^3)_{q}=3{q+1 \choose 2}$, a direct computation proves the claim.

To prove the claim for $ \ker \tilde \phi^2_q$, we use the exact sequence \eqref{eq:eskey}.
Since $\tilde \phi^2_q(\tilde I)=0$,  we have 
$$\dim \ker \{\tilde \phi^2_q : \tilde \Omega^2_q \to \Omega^2(\C^2)_{qd}\}=
\dim \ker \{\hat\phi^2_q : \tilde \d (\tilde \Omega^1_q) \to \Omega^2(\C^2)_{qd}\}+\dim \tilde I.$$
Note that $\dim \tilde I= \dim \tilde \Omega^3_q=\dim M(F)_{q-3}$, where $M(F)$ denotes the Milnor algebra of $F$, as in \eqref{definv}. In particular, we have 
$$\dim M(F)_{q-3}=\tau(\Cc)$$
for $q \geq st(F)+3$. To complete  the proof of Theorem \ref{thm2forms}, we just apply Theorem \ref{thmmainA}.
\end{proof}

It should be noted that the lower bounds on $q$ given in Theorem \ref{thmforms1}  and Theorem \ref{thm2forms} are improved if the curve is additionally assumed to be free or nearly (see also the remark following Example \ref{ex1} for a similar behavior). Moreover, 
many numerical experiments have shown that these lower bounds on $q$ are sharp.

\begin{exmp} We illustrate the sharpness of the lower bounds on $q$ given in Theorem \ref{thmforms1}  and in Theorem \ref{thm2forms}  with Example \ref{ex1}, Example \ref{ex2} and Example \ref{ex4a}.
\begin{itemize}
	\item[i)] For the nearly free curve with exponents $(1,d-1)$ in Example \ref{ex1}, for $d=8$ and $m_1=3$, we get
	the claim in Theorem \ref{thmforms1} ii) for $ q \geq 8=d$ and the claim in 
	Theorem \ref{thm2forms} ii)  for $q \geq 14=2d-2$, both sharp results. We get the  claim iii) in
	Theorem \ref{thmforms1} for $q \geq 8=d$, and the  claim iii) in Theorem \ref{thm2forms} for $q \geq 16=2d$, again both sharp results.

	\item[ii)] For the free curve in Example \ref{ex2}, for $d=8$ and $m_1=3$, the exponents are $(3,4)$. We get
	the claim in Theorem \ref{thmforms1} ii) for $ q \geq 6=d-2$ and the claim in 
	Theorem \ref{thm2forms} ii)  for $q \geq 14=2d-2$, both sharp results. We get the claim iii) in
	Theorem \ref{thmforms1} for $q \geq 4=d-4$, and  the  claim iii) in Theorem \ref{thm2forms} for $q \geq 14=2d-2$, both sharp results.
	
			\item[iii)] For the non-cuspidal non-free curve in Example \ref{ex4a} i), for $d=9$, we get the claim in Theorem \ref{thmforms1} ii)
	for $ q \geq 9=d$ and  the claim in 
	Theorem \ref{thm2forms} ii) 
	for $q \geq 16=2d-2$, again both sharp. We get the claim iii) in
	Theorem \ref{thmforms1} for $q \geq 9=d$, and  the  claim iii) in Theorem \ref{thm2forms} for $q \geq 18=2d$, again both sharp.
	
	\item[iv)] For the non-cuspidal free curve with exponents $(4,5)$ in Example \ref{ex4a} ii), we get the claim in Theorem \ref{thmforms1} ii)
	for $ q \geq 8=d-2$ and the claim in Theorem \ref{thm2forms} ii) 
	for $q \geq 18=2d-2$, both sharp results. We get the claim iii) in
	Theorem \ref{thmforms1} for $q \geq 5=d-5$, and the  claim iii) in Theorem \ref{thm2forms} for $q \geq 18=2d-2$, both sharp results.
\end{itemize}		
\end{exmp}

Finally, we now turn to the problem of deciding the freeness, or the nearly freeness, of a rational curve directly from its parameterization.

Consider the graded $S$-module of {\it extended Jacobian syzygies}
\begin{equation} \label{eqare}
ARE(F)=\{(a,b,c)\in (S)^3 : aF_x+bF_y+cF_z \in (F)\},
\end{equation} 
where $(F)$ denotes the ideal generated by $F$ in $S$. 
It is easy to see that one has the following direct sum decomposition
\begin{equation} \label{eqsum}
ARE(F)=AR(F) \oplus S \cdot E,
\end{equation} 
where $E=(x,y,z)$ correspon{\rm d}s to the Euler vector field.
\begin{rem} \label{freedef2}
The curve $\Cc:F=0$ is free with exponents $d_1 \leq d_2$ if and only if the $S$-graded module $ARE(F)$ is free of rank three, and admits a basis $r_0=E$, $r_1=(r_{10},r_{11},r_{12})$, $r_2=(r_{20},r_{21},r_{22})$ with $\deg r_{ij}=d_i$, for $i=1,2$ and $j=0,1,2$.
\end{rem}

\begin{thm}
\label{thmmdr}
Let $a(q)=\dim \ker \{\phi^2_q: \Omega^2(\C^3)_{q+2} \to \Omega^2(\C^2)_{(q+2)d}\}$ for $q\geq 0$. Then 
$$a(q)=\dim ARE(F)_q= {q+1 \choose 2} + \dim AR(F)_q.$$
Moreover, the following properties hold.
\begin{itemize}
		\item[i)] The integer $r=mdr(F)$ is determined by the properties:  
		$$a(q)={q+1 \choose 2} \text{ for all  } 0 \leq q<r \text{ and } a(r)>{r+1 \choose 2}.$$
		\item[ii)] The curve $\Cc:F=0$ is free if and only if $2r \leq d-1$ and
		$$a(d-r-1)={d-r \choose 2} + {d-2r+1 \choose 2}+1.$$
		\item[iii)]  The curve $\Cc:F=0$ is nearly free if and only if 
		 $2r \leq d$ and 
		$$a(d-r)={d-r+1 \choose 2} + {d-2r+2 \choose 2}+2.$$
		
	\end{itemize}	

\end{thm}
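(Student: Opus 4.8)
The plan is to first establish the identification of $a(q)$ with $\dim ARE(F)_q$, and then read off the three characterizations from Theorem \ref{thm4} combined with the module structure of $ARE(F)$ in \eqref{eqsum}. For the first step, I would work with the explicit description of $\phi^2_q$ given just before Theorem \ref{thmforms1}: an element $\omega = A_x\,{\rm d}y\wedge{\rm d}z + A_y\,{\rm d}x\wedge{\rm d}z + A_z\,{\rm d}x\wedge{\rm d}y$ with $A_x,A_y,A_z \in S_{q}$ (since we evaluate at degree $q+2$) is sent to $(A_x(\phi)m_{12}+A_y(\phi)m_{02}+A_z(\phi)m_{01})\,{\rm d}s\wedge{\rm d}t$. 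Using that $(m_{12},-m_{02},m_{01})$ is proportional to $(g_0',g_1',g_2')$ (Remark \ref{rk55}) and that $(g_0',g_1',g_2')$, up to the common factor $h$, are the pull-backs $F_x(\phi),F_y(\phi),F_z(\phi)$, the vanishing condition becomes $A_x(\phi)F_x(\phi)+A_y(\phi)F_y(\phi)+A_z(\phi)F_z(\phi) = 0$ in $R$. Since $\phi$ is generically one-to-one, $\phi^0 : S/(F) \hto R$ is injective (as used in the proof of Theorem \ref{thmforms1}), so this holds if and only if $A_xF_x+A_yF_y+A_zF_z \in (F)$ in $S$, i.e.\ $(A_x,A_y,A_z) \in ARE(F)_q$. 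Hence $a(q) = \dim ARE(F)_q$, and the displayed formula $a(q) = \binom{q+1}{2} + \dim AR(F)_q$ follows immediately from \eqref{eqsum}, since $S\cdot E$ in degree $q$ has dimension $\dim S_{q-1} = \binom{q+1}{2}$.

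For part (i), the minimal degree $r = mdr(F)$ is by definition the least $q$ with $AR(F)_q \neq 0$; translating through $a(q) = \binom{q+1}{2} + \dim AR(F)_q$ gives exactly the stated criterion: $a(q) = \binom{q+1}{2}$ for $q < r$ and $a(r) > \binom{r+1}{2}$. For parts (ii) and (iii), the idea is to invoke Theorem \ref{thm4}: freeness (resp.\ nearly freeness) is equivalent to $\tau(\Cc) = \tau(d,r)$ (resp.\ $\tau(d,r)-1$), with $2r \leq d-1$ (resp.\ $2r \leq d$). I would then use the known Hilbert function of $AR(F)$ in these two cases. When $\Cc$ is free with exponents $(d_1,d_2) = (r, d-1-r)$, the module $AR(F)$ is free with generators in degrees $d_1$ and $d_2$, so $\dim AR(F)_k = \binom{k-d_1+2}{2} + \binom{k-d_2+2}{2}$; evaluating at $k = d-r-1 = d_2$ gives $\binom{d-2r+1}{2} + \binom{2}{2} = \binom{d-2r+1}{2}+1$, and adding $\binom{d-r}{2}$ yields the formula in (ii). Similarly, in the nearly free case with exponents $(r,d-r)$, the minimal free resolution of $AR(F)$ has the form $0 \to S(-d-1) \to S(-d_1)\oplus S(-d_2)^2 \to AR(F) \to 0$, giving $\dim AR(F)_k = \binom{k-d_1+2}{2} + 2\binom{k-d_2+2}{2} - \binom{k-d+1}{2}$; at $k = d-r$ this evaluates to $\binom{d-2r+2}{2} + 2\binom{2}{2} - 0 = \binom{d-2r+2}{2}+2$, and adding $\binom{d-r+1}{2}$ gives (iii).

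The remaining point is the converse direction in (ii) and (iii): one must check that the single numerical equality at $q = d-r-1$ (resp.\ $q=d-r$), together with the inequality on $r$, \emph{forces} freeness (resp.\ nearly freeness), not merely that it is implied by it. Here I would argue as follows. Knowing $r = mdr(F)$, Theorem \ref{thm4} shows $\tau(\Cc) \leq \tau(d,r)$ always, with equality iff free, and $\tau(\Cc) \leq \tau(d,r)-1$ if not free, with equality iff nearly free. On the other hand, $\dim AR(F)_{d-r-1}$ is controlled from below by the syzygy of minimal degree $r$: one always has $\dim AR(F)_{d-r-1} \geq \binom{d-2r+1}{2}$ coming from the $S_{d-2r-1}$-multiples of a fixed minimal syzygy (these are independent since $R$, hence $S/(F)$, is a domain and the syzygy is nonzero), and a standard argument (e.g.\ as in \cite{DStFD,DStRIMS}) shows that the ``$+1$'' can be attained only when the second generator sits in degree exactly $d-1-r$ with no further syzygies, which is precisely freeness. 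The analogous counting with an extra unit at $q=d-r$ pins down the nearly free resolution shape. I expect this converse—showing that the prescribed Hilbert value is large enough to be attainable \emph{only} in the free/nearly free case—to be the main obstacle; the cleanest route is probably to express $\dim AR(F)_{q}$ in terms of $\tau(\Cc)$ and $\dim M(F)_{q}$ via a duality/Hilbert-function identity (as in \cite[Formula (2.8)]{DStRIMS}), reducing everything to the already-established relation between $\tau(\Cc)$, $\tau(d,r)$ and the free/nearly free dichotomy in Theorem \ref{thm4}, so that the numerical equalities in (ii) and (iii) become literal restatements of $\tau(\Cc) = \tau(d,r)$ and $\tau(\Cc) = \tau(d,r)-1$ respectively.
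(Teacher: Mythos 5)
Your identification of $\ker \phi^2_q$ with $ARE(F)_q$ is exactly the paper's argument: write a $2$-form in $\Omega^2(\C^3)_{q+2}$ with coefficients $A_x,A_y,A_z\in S_q$, use the proportionality of $(m_{12},-m_{02},m_{01})$ with $(g_0',g_1',g_2')$ together with the fact that $\phi^0$ is injective on $S/(F)$ to translate $\phi^2_q(\omega)=0$ into $A_xF_x+A_yF_y+A_zF_z\in (F)$, and then split off the Euler component via \eqref{eqsum} to get $a(q)={q+1\choose 2}+\dim AR(F)_q$. Part i) is then read off identically in both proofs.

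The divergence is in ii) and iii). The paper disposes of these in one line by citing \cite[Theorem 4.1]{Dmax}, which states precisely the two equivalences (both the ``only if'' \emph{and} the ``if''). You instead compute the forward implications from the minimal resolutions of $AR(F)$ in the free and nearly free cases; these computations give the right values, except that the twist of the relation module for a nearly free curve should be $S(-d_2-1)=S(-(d-r+1))$ rather than $S(-d-1)$ --- the relation $hr_1+\ell r_2+\ell' r_3=0$ has degree $d_2+1$ --- though this slip does not change the evaluation at $k=d-r$, where both twists contribute $0$. The genuine gap is the converse direction: you explicitly defer it (``I expect this converse\dots to be the main obstacle'') and offer only a strategy, namely expressing $\dim AR(F)_q$ through $\tau(\Cc)$ and $\dim M(F)_q$ and reducing to Theorem \ref{thm4}. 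That converse is exactly the content of the cited \cite[Theorem 4.1]{Dmax}, and your proposed route is essentially how that result is proved, so the plan is sound; but as written your argument establishes only one implication of ii) and iii), and you would need either to carry the reduction out in full or to invoke the citation, as the paper does, for the proof to be complete.
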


\begin{proof}
	Let $(A_x,A_y,A_z) \in (S_q)^3$ and note that 
	$$(A_x,A_y,A_z) \in ARE(F)_q \text{ if and only if } \phi^0(A_xF_x+A_yF_y+A_zF_z)=0.$$
	Using the discussion at the beginning of the proof of Theorem \ref{thmforms1}, we see that the last condition is equivalent to 
	$$A_x(\phi)g_0'+ A_y(\phi)g_1'+A_z(\phi)g_2'=0.$$
	As noted in Remark \ref{rk55}, the vector $(g_0',g_1',g_2')$ is proportional to the vector
	$(f_0^{\vee}, f_1^{\vee},f_2^{\vee})$, and hence to the vector $(m_{12}, -m_{02},m_{01})$
	considered just before Theorem \ref{thm52}. Hence our condition above is equivalent to
	$$A_x(\phi)m_{12}-A_y(\phi)m_{02}+A_z(\phi)m_{01}=0.$$
	Note that any form $\omega \in \Omega^2(\C^3)_{q+2}$ can be written as
	$$\omega=A_x{\rm d}y\wedge {\rm d}z-A_y{\rm d}x\wedge {\rm d}z+A_z{\rm d}x\wedge {\rm d}y,$$
	for some $(A_x,A_y,A_z) \in (S_q)^3$. Moreover one has 
	$$\phi^2(\omega)=A_x(\phi)m_{12}-A_y(\phi)m_{02}+A_z(\phi)m_{01}$$
	and this establishes a bijection 
	$$\ker \{\phi^2_q: \Omega^2(\C^3)_{q+2} \to \Omega^2(\C^2)_{(q+2)d}\}=ARE(F)_q.$$
	For $q<r=mdr(f)$, the only elements in $ARE(F)_q$ are the multiplies of $E$, hence
	$ARE(F)_q=S_{q-1}E$, which implies $a(q)={q+1 \choose 2}$ for $q<mdr(F)$.
	For $q=r$ we get at least a new element in $AR(F)_r$, which is not a multiple of $E$ in view of \eqref{eqsum}, and hence $a(r)>{r+1 \choose 2}$.
	The remaining claims follow from \cite[Theorem 4.1]{Dmax}. Note that the last equality in that result has a misprint, the correct statement obviously being $\delta(f)_{d-r}=2$.
\end{proof}

\end{document}